\newtheorem{theorem}{Theorem}[section]
\newtheorem{corollary}[theorem]{Corollary}
\newtheorem{lemma}[theorem]{Lemma}
\theoremstyle{definition}
\newtheorem{definition}[theorem]{Definition}
\newtheorem{remark}[theorem]{Remark}
\newtheorem{example}[theorem]{Example}
\numberwithin{equation}{section}
\title[On locally compact groups of small topological entropy]{On locally compact groups of small topological entropy}
\author[F. G. Russo]{Francesco G. Russo$^\natural$ }
\address{  Francesco G. Russo and Olwethu Waka\endgraf
Department of Mathematics and Applied Mathematics\endgraf 
University of Cape Town\endgraf
Private Bag X1, Rondebosch 7701\endgraf 
Cape Town, South Africa\endgraf
Emails: \texttt{francescog.russo@yahoo.com}\endgraf
\hspace{1.2cm}\texttt{wolwethu@gmail.com}
}
\author[O. Waka]{Olwethu Waka}
\date{4th of April 2023 \endgraf
$^\natural$ Corresponding author}
\begin{document}

\maketitle

\begin{abstract} We discuss the finiteness of the topological entropy  of continuous endomorphims for some classes of locally compact groups. Firstly, we focus on the  abelian case, imposing the condition of being compactly generated, and note an interesting behaviour of  slender  groups. Secondly, we remove the condition of being abelian and consider nilpotent periodic locally compact  $p$-groups ($p$ prime), reducing the computations  to the case of Sylow $p$-subgroups. Finally, we investigate locally compact Heisenberg $p$-groups $\mathbb{H}_{n}(\mathbb{Q}_{p})$ on the field $\mathbb{Q}_{p}$ of the $p$-adic rationals with $n$ arbitrary positive integer. \\
\\
\textsc{Keywords and Phrases}: Topological Entropy;  Locally Compact Groups; Dynamical Systems; Sylow $p$-Subgroups;  Slender Groups; Heisenberg $p$-Groups.\\
\\
\textsc{Mathematics Subject Classification} 2020: 22A05, 37B40,  54C70.
\end{abstract}


\section{Motivations  and  Main Results}

In the present paper a locally compact group is always assumed to be a topological group whose topology is both Hausdorff and locally compact. Hood   \cite{H} formulated a notion of topological entropy involving the well known concept of uniformity for a topological space. His definition  applies to a topological groups possessing a left uniformity, since continuous endomorphisms  are uniformly continuous (in connection with the given left uniformity). Let's be more formal on Hood's Entropy \cite{H} in the context of what we need to investigate here. For a   locally compact group $G$, we denote by $\mathcal{CT}(G)$ the collection of all compact neighborhoods of the identity of $G$, and by $\mu$ a left invariant Haar measure on $G$. For a continuous endomorphism $\varphi$ of $G$, an element $V\in\mathcal{CT}(G)$ and an $n\in\mathbb{N}=\{1, 2, 3, \ldots\}$,  \begin{equation}C_n(\varphi,V)=V\cap\varphi^{-1}(V)\cap\ldots\cap\varphi^{-n+1}(V)\in\mathcal{CT}(G)
\end{equation}
defines the \emph{$n$-th $\varphi$-cotrajectory of $V$}.   \textit{The topological entropy} of $\varphi$ (in the sense of Hood) is 
 \begin{equation}\mathsf{h}_{\mathsf{top}}(\varphi)=\sup\left\{\limsup_{n\to\infty} \left(\frac{-\log\mu(C_n(\varphi,V))}{n}\right) \  \ \Big|   \ \ V\in\mathcal{CT}(G)\right\},
 \end{equation} 
Adler and others \cite{AdlerKonheimMcAndrew, B, P}  investigated the aforementioned notions, stressing on  dynamical properties of topological structures  with relations with ergodic theory and  mathematical physics.

Following \cite{DS, daf}, we may introduce the \emph{topological entropy of} a locally compact group $G$ as \begin{equation}\label{importantset}\mathsf{E}_{\mathsf{top}}(G)=\{\mathsf{h}_{\mathsf{top}}(\varphi)\mid \varphi\in \mathrm{End}(G)\},\end{equation}
where $\mathrm{End}(G)$ denotes the ring of continuous endomorphisms of $G$ and $\mathrm{Aut}(G)$ the group of continuous automorphisms of $G$. Here we  investigate  
the cardinality of  \eqref{importantset} and relations with structural properties, as  made in   \cite{B,  patrao2, daf, patrao1, Sch, SVirili, Y}. 

Denoting with $\widehat{ \mathbb{Q}}$ the topological dual (in the sense of Pontryagin) of the additive group $\mathbb{Q}$ of the rationals, we note that
\begin{equation}\label{minimizationproblem}\inf\{\mathsf{E}_{\mathsf{top}}(G)\setminus\{0\}\mid G\ \text{is a compact group}\}=\inf(\{\mathsf{h}_{\mathsf{top}}(\varphi) \mid \varphi \in \mathrm{Aut}(\widehat{\mathbb{Q}}^n),\ n\in\mathbb{N}\}\setminus\{0\})
\end{equation} and a formula of  Yuzvinski \cite{Y} shows that $\mathsf{h}_{\mathsf{top}}(\varphi)$ can be calculated from the solutions of the characteristic polynomial of $\varphi$ (see \cite{LW,Y}). Looking at locally compact groups, we also note  that  $\mathsf{h}_{\mathsf{top}}(\psi)$ is finite for any  $\psi \in \mathrm{End}(\mathbb{R})$. Actually, we can do much more: given  $t \in \mathsf{E}_{\mathsf{top}}(\mathbb{R}) \setminus \{+\infty\}$ we may construct  $\psi \in \mathrm{Aut}(\mathbb{R})$ of  $\mathsf{h}_{\mathsf{top}}(\psi)=t$, see \cite{B,  Walters}. 

Following \cite{DS, daf, BrunoVirili, SVirili}, we  introduce (for a locally compact group $G$)
\begin{equation}\label{classes}\mathfrak E_0=\{G\mid \mathsf{E}_{\mathsf{top}}(G)=\{0\}\}\quad \text{and}\quad \mathfrak E_{<\infty}=\{G\mid \mathsf{E}_{\mathsf{top}}(G)=[0, +\infty)\}
\end{equation} 
and note that there are results, which describe  the abelian cases in $\mathfrak E_{<\infty}$ and  $\mathfrak E_0$. The characterization of groups in $\mathfrak E_0$ can indicate the presence of structural theorems. For instance, finite abelian groups are in $\mathfrak E_0$ and  have a decomposition in direct product. On the other hand, very little is known in the nonabelian case in $\mathfrak E_{<\infty}$ and  $\mathfrak E_0$.

Following \cite[Definition 2.2]{HHR} and denoting by $\mathbb{P}$ the set of all primes, an element $g$ of a locally compact group $G$ is called $p$-$element$, if the sequence $g^{p^k}$ with $k \in \mathbb{N}$ tends to the identity element  in $G$. A locally compact group $G$ is called  $p$-$group$, if $G$ coincides with  
\begin{equation}
G_p=\{g \in G \mid  g  \ \mbox{is a} \ p\mbox{-element}\}=\{g \in G \mid \lim_{k \to \infty} g^{p^k}=1\}.
\end{equation}
A maximal $p$-subgroup of a locally compact group $G$ is called $p$-$Sylow$ $subgroup$ of $G$. Note that $G_p$ turns out to be a closed subgroup by  \cite[Lemma 2.6]{HHR}, when $G$ is totally disconnected. Following \cite{HHR, hofmor}, we  denote by $G_0$ the connected component of the identity 
and  say that $G$ is \textit{compactly covered}, if  for an arbitrary $x \in G$ we can always find a compact subgroup $C$ of $G$ such that $x \in C$.  From \cite[p.5]{HHR}, a \textit{compact element} of $G$ is an element $g \in G$ such that $\overline{\langle g \rangle}$ is compact and the set 
\begin{equation}
\mathrm{comp}(G)=\{g \in G \mid g \ \mbox{is a compact element  } \}
\end{equation}
is described in \cite[Proposition 1.3, Lemma 1.6]{HHR}. For instance,  $G= \mathrm{comp}(G)$ when $G$ is locally compact abelian, but in general $\mathrm{comp}(G)$ is just a subset of $G$, not necessarily a subgroup.  Note that $\mathrm{comp}(G)$ is denoted by $B(G)$ in \cite{bd, DS, daf}; similarly, $G_0$  by $c(G)$.    Following  \cite[Proposition 1.3]{HHR}, we call  \emph{periodic} those locally compact groups $G$ such that $G_0=1$  and $\overline{\langle g \rangle}$ is compact for all $g \in G$. Of course, periodic  locally compact groups are totally disconnected, so their Sylow $p$-subgroups are closed and $\mathrm{comp}(G)=G$ by \cite[Lemma 1.6]{HHR}. 

A locally compact group $G$ is {\em topologically finitely generated}, if there exists a finite subset $X$ of $G$ such that $G=\overline{\langle X \rangle}$. In particular, a locally compact $p$-group $G$ has \textit{ﬁnite p-rank}, if 
\begin{equation}
\mathrm{rank}_p(G) = \max \{ \mathrm{rank}_p(H) \ | \ H \ \mbox{closed subgroup of} \ G\}
\end{equation}
is a positive integer, where also the following quantities are positive integers
\begin{equation}\label{defect}
\mathrm{rank}_p(H)=\min \{|Y| \ | \ Y \subseteq H \ \mbox{and} \ \overline{\langle Y \rangle} = H\}.
\end{equation}
For compact $p$-groups, see also  \cite[\S 2.4]{rz}. Following \cite{HHR, hofmor}, a locally compact  group $G$ is $compactly$ $generated$ if there exists a compact set $C$ such that $G=\langle C \rangle$. It is possible to provide examples of periodic locally compact  groups, which are not compactly generated.  It is also possible to provide examples which show that  ``topologically finitely generated groups'' and  ``compactly generated groups'' are two different notions. 

\begin{theorem}[See, \cite{hofmor}, Theorem 7.57]\label{CompactlyGenerated}  Every compactly generated locally compact abelian group  is isomorphic to a direct sum $\mathbb{R}^d \oplus \mathbb{Z}^m \oplus K$ for a compact abelian group $K$ and two   nonnegative integers $d, m$. 
\end{theorem}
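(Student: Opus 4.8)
The plan is to reduce the statement to the fundamental theorem of finitely generated abelian groups by first peeling off a Euclidean factor and a compact factor and then analysing a discrete quotient. The main structural input I would invoke is the vector subgroup splitting theorem for locally compact abelian groups, available in the same source \cite{hofmor}: every such $G$ is topologically isomorphic to $\mathbb{R}^{d}\times L$, where $L$ contains a compact open subgroup $K$. I would take this decomposition as the starting point, so that the entire problem is transferred to the group $L$, and the nonnegative integer $d$ is already produced.

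Next I would exploit the hypothesis that $G$ is compactly generated. Since $\mathbb{R}^{d}$ is a closed direct factor, $L\cong G/\mathbb{R}^{d}$ is again compactly generated, and because $K$ is open the quotient $L/K$ is discrete. A compactly generated discrete abelian group is finitely generated, since in a discrete group a compact generating set is finite; hence the fundamental theorem gives $L/K\cong\mathbb{Z}^{m}\oplus F$ with $F$ finite and $m$ a nonnegative integer. Writing $\pi\colon L\to L/K$ for the quotient map, I would set $K'=\pi^{-1}(\{0\}\oplus F)$. This subgroup contains $K$ as an open subgroup of finite index $|F|$, so it is a finite union of translates of the compact set $K$ and is therefore itself compact, while by construction $L/K'\cong\mathbb{Z}^{m}$.

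It then remains to split the extension
\begin{equation}
0\longrightarrow K'\longrightarrow L\longrightarrow \mathbb{Z}^{m}\longrightarrow 0 .
\end{equation}
Algebraically this splits because $\mathbb{Z}^{m}$ is free abelian: lifting the standard generators to elements of $L$ defines a section $s\colon\mathbb{Z}^{m}\to L$ with trivial intersection $s(\mathbb{Z}^{m})\cap K'=\{0\}$. The step I expect to be the crux is to upgrade this to a \emph{topological} splitting, that is, to verify that $L\cong K'\times\mathbb{Z}^{m}$ as topological groups rather than merely as abstract groups. Here the openness of $K'$ is decisive: since $\mathbb{Z}^{m}$ is discrete the section $s$ is automatically continuous, each coset of $K'$ meets $s(\mathbb{Z}^{m})$ in exactly one point so that $s(\mathbb{Z}^{m})$ is a discrete (hence closed) subgroup, and the map $(k,x)\mapsto k\cdot s(x)$ has a continuous inverse $l\mapsto\bigl(l\cdot s(\pi'(l))^{-1},\pi'(l)\bigr)$, where $\pi'\colon L\to L/K'\cong\mathbb{Z}^{m}$. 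Combining this with the first decomposition yields $G\cong\mathbb{R}^{d}\times\mathbb{Z}^{m}\times K'$ with $K'$ compact abelian, which is exactly the asserted form. I would remark that an alternative route proceeds through Pontryagin duality, dualising $G$ to a group whose identity component is $\mathbb{R}^{d}\times\mathbb{T}^{m}$, but the direct argument above keeps the role of compact generation most transparent, with the only genuinely deep ingredient being the vector subgroup splitting theorem.
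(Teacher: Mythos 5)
The paper offers no proof of this statement: it is quoted directly from \cite{hofmor} (Theorem 7.57) and used as a black box, so there is no internal argument to compare against. Your proof is correct and is essentially the standard derivation found in the cited source: the vector group splitting theorem reduces everything to a group $L$ with a compact open subgroup $K$; compact generation passes to the discrete quotient $L/K$, whose compact generating set is finite, so $L/K\cong\mathbb{Z}^m\oplus F$; enlarging $K$ to the compact open preimage $K'$ of $F$ leaves the free quotient $\mathbb{Z}^m$; and the algebraic splitting of $0\to K'\to L\to\mathbb{Z}^m\to 0$ (which exists since $\mathbb{Z}^m$ is free and $L$ is abelian) is automatically topological because $\mathbb{Z}^m$ is discrete and $K'$ is open, exactly as you argue. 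The only caveat worth recording is that virtually all of the depth of the theorem is concentrated in the vector group splitting theorem you import at the outset, so the proposal is a faithful reduction to that result rather than an independent proof; aside from a harmless lapse into multiplicative notation for an additive group, every step checks out.
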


We  are going to focus on specific classes of locally compact abelian groups and check whether the topological entropy of their continuous endomorphisms is finite or not;  results  of the type  of Theorem \ref{CompactlyGenerated} are fundamental for this scope.  Denote the cartesian sum of  countably many copies of $\mathbb{Z}$ by $\mathbb{Z}^{\mathbb{N}}=\{{(x_i)}_{i \in \mathbb{N}} \ | \  x_i \in \mathbb{Z}  \}$ and    by  $\mathbb{Z}^{(\mathbb{N})}=\{{(x_i)}_{i \in \mathbb{N}} \ | \  x_i \in \mathbb{Z} \ \mbox{and} \ x_i=0 \ \mbox{for almost all}  \ i  \}$ the direct sum of countably many copies of $\mathbb{Z}$. Denote by $\mathrm{dim}(A)$ the dimension of a compact abelian group $A$, that is, the dimension of the $\mathbb{Q}$-module $\mathbb{Q} \otimes \widehat{A}$ as per \cite[Definitions 8.23]{hofmor}. Note also from \cite[Corollary 7.58]{hofmor} that a connected compact abelian group $A$ of finite dimension is characterized to be the direct sum of finitely many copies of the torus $\mathbb{T}=\mathbb{R}/\mathbb{Z}$.


\begin{definition}[See \cite{Fuchs}, p.489]  A   (discrete) torsion-free abelian group $G$ is \textit{slender}, if for every homomorphism $ \alpha : {(e_{i})}_{i \in \mathbb{N}} \in \mathbb{Z}^{\mathbb{N}} \longmapsto  \alpha({(e_{i})}_{i \in \mathbb{N}})\in G$ we have  $\alpha({(e_{i})}_{i \in \mathbb{N}})=0$ for almost all $i$, where ${(e_{i})}_{i \in \mathbb{N}}$ is the sequence with  the $i$-th component equal to 1 and 0 elsewhere.
\end{definition}

 Our first main result can be now formulated:

\begin{theorem}[First Main Theorem]\label{olwethusproof}
  Let $G$ be a compactly generated locally compact abelian group. With the notations of Theorem \ref{CompactlyGenerated}, the following statements are  satisfied :
  \begin{itemize}
      \item[(a).]  If  $G$ is slender, then $G \in \mathfrak{E}_0$. Viceversa, if $G \in \mathfrak{E}_0$ and  $K=0$, then $G$ is slender. 
      \item[(b).] Assume that  $K$ is connected. Then $G \in \mathfrak{E}_{< \infty}$ if and only if $G \simeq  \mathbb{R}^d \oplus \mathbb{Z}^m \oplus \mathbb{T}^s$ for some nonnegative integers $d,m,s$.
  \end{itemize}
\end{theorem}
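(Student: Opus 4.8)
The plan is to read $G\simeq\mathbb{R}^d\oplus\mathbb{Z}^m\oplus K$ through Theorem~\ref{CompactlyGenerated} and to transport every entropy computation to the Pontryagin dual $\widehat G$, where a continuous endomorphism dualizes to an endomorphism of a discrete group and $\mathsf{h}_{\mathsf{top}}$ is matched, via the bridge theorem between topological and algebraic entropy, with the algebraic entropy of the dual map. On the dual side Yuzvinski's formula \cite{Y,LW} governs the compact summands, the Euclidean summand carries the realization result for $\mathbb{R}$ recalled above \cite{B,Walters}, and the discrete summands are inert. I treat (a) and (b) separately: (a) is at bottom a discreteness argument, (b) a finiteness-versus-dimension argument on $K$.

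For (a), I first note that the very definition of slenderness forces $G$ to be discrete and torsion-free. Inserting this into Theorem~\ref{CompactlyGenerated} kills the Euclidean part ($d=0$), makes $K$ compact and discrete, hence finite, and then torsion-freeness makes $K=0$, so that $G\simeq\mathbb{Z}^m$. For a discrete group the Haar measure $\mu$ is counting measure and each $V\in\mathcal{CT}(G)$ is a finite set containing the identity; every cotrajectory $C_n(\varphi,V)$ is then a decreasing chain of nonempty finite sets with $1\le\mu(C_n(\varphi,V))\le\mu(V)$, so $-\log\mu(C_n(\varphi,V))/n\to 0$ for all $\varphi$ and all $V$, giving $\mathsf{h}_{\mathsf{top}}(\varphi)=0$ and $G\in\mathfrak{E}_0$. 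For the converse I assume $K=0$, so $G\simeq\mathbb{R}^d\oplus\mathbb{Z}^m$, and argue by contraposition: if $d\ge 1$, the recalled construction on $\mathbb{R}$ yields an automorphism of a single Euclidean summand of entropy $\log 2$, which extended by the identity on the remaining summands has, by the addition theorem, entropy $\log 2>0$; this contradicts $G\in\mathfrak{E}_0$. Hence $d=0$, $G\simeq\mathbb{Z}^m$, and a free abelian group of finite rank is slender.

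For (b) with $K$ connected, the direction $(\Leftarrow)$ splits into finiteness and realization. If $K\simeq\mathbb{T}^s$ then $\widehat G\simeq\mathbb{R}^d\oplus\mathbb{T}^m\oplus\mathbb{Z}^s$ involves only finite-rank and finite-dimensional blocks, so combining Yuzvinski's formula on the toral block with the addition theorem across blocks shows $\mathsf{h}_{\mathsf{top}}(\varphi)<\infty$ for every $\varphi\in\mathrm{End}(G)$, i.e. $\mathsf{E}_{\mathsf{top}}(G)\subseteq[0,+\infty)$. The continuum of values is then supplied by the Euclidean summand: for each $t\in[0,+\infty)$ the recalled construction \cite{B,Walters} gives $\psi_t\in\mathrm{Aut}(\mathbb{R})$ with $\mathsf{h}_{\mathsf{top}}(\psi_t)=t$, and $\psi_t$ extended by the identity has entropy exactly $t$ by the addition theorem, whence $[0,+\infty)\subseteq\mathsf{E}_{\mathsf{top}}(G)$ and equality holds. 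For $(\Rightarrow)$ I argue contrapositively: if $K$ is connected compact but not a finite-dimensional torus, then by \cite{hofmor} (Corollary 7.58) $K$ must be infinite-dimensional, so its dual $\widehat K$ is a discrete torsion-free group of infinite rank. On such a group one constructs a shift-type endomorphism of infinite algebraic entropy; by the bridge theorem its dual is a continuous endomorphism of $K$ of infinite topological entropy, and composing the projection $G\to K$ with the inclusion $K\hookrightarrow G$ turns it into $\varphi\in\mathrm{End}(G)$ with $\mathsf{h}_{\mathsf{top}}(\varphi)=+\infty$, so $G\notin\mathfrak{E}_{<\infty}$.

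I expect the genuine obstacle to be this last construction: manufacturing a single endomorphism of infinite entropy out of the mere infinite-dimensionality of $K$, and making the passage between the topological entropy on $K$ and the algebraic entropy on $\widehat K$ fully rigorous, since the bridge theorem must be invoked in the locally compact (not merely compact or discrete) setting and the shift must be realized by an honest continuous endomorphism of $K$ rather than only of a subquotient. A secondary, more bookkeeping-level difficulty is controlling the entropy of endomorphisms of $G$ that mix the Euclidean, discrete and compact blocks, where the addition theorem has to be applied along a suitable $\varphi$-invariant filtration to guarantee both finiteness in $(\Leftarrow)$ and the exact value $t$ in the realization step.
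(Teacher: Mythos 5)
Your architecture for the converse of (a) and for the finiteness half of (b) matches the paper's (the structure theorem of Theorem \ref{CompactlyGenerated} plus the addition theorem of Lemma \ref{ATT_for_CGLCA_Groups} across the three blocks), but there are two genuine gaps in the remaining halves. In (a) you assert that ``the very definition of slenderness forces $G$ to be discrete.'' It does not: slenderness is a purely algebraic property of the underlying abstract group (a condition on homomorphisms $\mathbb{Z}^{\mathbb{N}}\to G$), and the parenthetical ``(discrete)'' in the definition only records the category in which the notion is usually stated. The content of the forward direction of (a) is precisely to show that a compactly generated locally compact abelian group which is slender must collapse to $\mathbb{Z}^m$; assuming discreteness at the outset begs the question. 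The paper gets $d=0$ from Sasiada's theorem (Lemma \ref{slender_reduced}: slender groups are reduced, while $\mathbb{R}$ contains the divisible subgroup $\mathbb{Q}$ and subgroups of slender groups are slender by Lemma \ref{SlenderGroupsProperties}), and gets $K=0$ from Lemma \ref{NoCompactSlenderGroups} (no nontrivial compact abelian slender groups, which itself needs the profinite structure and the fact that $\mathbb{Z}_p$ is not slender). You need these two inputs; your direct computation that discrete groups lie in $\mathfrak{E}_0$ and your contrapositive elimination of $\mathbb{R}^d$ in the converse are fine and essentially equivalent to the paper's appeal to Remark \ref{discrete=0} and Theorem \ref{origin}(iii).

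In the forward direction of (b) you reduce everything to constructing, on every infinite-dimensional connected compact abelian $K$, a continuous endomorphism of infinite entropy via a ``shift-type'' endomorphism of the discrete dual $\widehat{K}$ and a bridge theorem. You correctly flag this as the main obstacle, but you do not resolve it, and the sketch does not obviously go through: $\widehat{K}$ is a torsion-free discrete group of infinite rank, but such a group need not decompose as a direct sum on which a Bernoulli-type shift can be defined, so the existence of an endomorphism of infinite algebraic entropy is not automatic. The paper avoids the construction entirely by citing Theorem \ref{origin}(i): $G\in\mathfrak{E}_{<\infty}$ forces $\mathrm{dim}(G)<\infty$, hence $\mathrm{dim}(K)<\infty$, and connectedness then gives $K\simeq\mathbb{T}^s$ by \cite[Corollary 7.58]{hofmor}. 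You should invoke that result rather than reprove it. A last small point: your realization step, producing every $t\in[0,+\infty)$ as an entropy value, only works when $d\ge 1$; for $d=0$ the entropy values of $\mathbb{Z}^m\oplus\mathbb{T}^s$ form a countable set, so the literal definition of $\mathfrak{E}_{<\infty}$ in \eqref{classes} cannot be met --- the theorem (and the paper's proof) is really about all entropies being finite, and for that the addition-theorem argument alone suffices.
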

Note that computations of the topological entropy of continuous automorphisms (not  endomorphisms) of   $\mathbb{R}^d \oplus \mathbb{Z}^m \oplus \mathbb{T}^s$ are available in \cite[pp. 475--476]{P}. Also \cite{patrao2, patrao1} contain computations of the topological entropy of continuous endomorphims, but mostly of Lie groups. We go ahead and describe the finiteness of the topological entropy for some nonabelian locally compact groups, looking at the behaviour of the Sylow $p$-subgroups. This is our second main result.

\begin{theorem}[Second Main Theorem]\label{oldconj}
The continuous automorphisms of a  nilpotent periodic locally compact $p$-group $G$ have  finite topological entropy  whenever  $\mathrm{rank}_p(G)$ is finite. \end{theorem}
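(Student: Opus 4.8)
The plan is to exploit the total disconnectedness forced by periodicity together with the rigidity that finite $p$-rank imposes on the compact open subgroups. First I would record that a periodic locally compact $p$-group $G$ is totally disconnected, so by van Dantzig's theorem it possesses a neighbourhood basis at the identity consisting of compact open subgroups; since $G=G_p$, these may be taken to be pro-$p$ groups. Fixing $\varphi\in\mathrm{Aut}(G)$, each $\varphi^{-i}(U)$ is again a compact open subgroup, whence every cotrajectory $C_n(\varphi,U)=U\cap\varphi^{-1}(U)\cap\cdots\cap\varphi^{-n+1}(U)$ is a compact open subgroup and $\mu(C_n(\varphi,U))=\mu(U)/[U:C_n(\varphi,U)]$. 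The first task is therefore to show that, in computing $\mathsf{h}_{\mathsf{top}}(\varphi)$, the supremum over $V\in\mathcal{CT}(G)$ may be restricted to this cofinal family of compact open subgroups: an arbitrary compact neighbourhood $V$ contains some compact open subgroup and is covered by finitely many of its translates, and these bounds are absorbed in the limit superior, so that
\begin{equation*}
\mathsf{h}_{\mathsf{top}}(\varphi)=\sup_{U}\ \limsup_{n\to\infty}\frac{\log[U:C_n(\varphi,U)]}{n},
\end{equation*}
the supremum ranging over the compact open subgroups $U$ of $G$. Here I would quote the corresponding facts for totally disconnected locally compact groups from \cite{daf, SVirili, BrunoVirili}.

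Next I would bring in the finite rank hypothesis. Since $\mathrm{rank}_p(G)=r<\infty$, every compact open pro-$p$ subgroup $U$ has finite rank bounded by $r$, hence is $p$-adic analytic of dimension $d=\dim U\le r$ by the Lubotzky--Mann characterisation of finite-rank pro-$p$ groups (see \cite[\S2.4]{rz}); nilpotency guarantees that this local dimension is a genuine invariant of $G$ and that $\varphi$ acts analytically. The point of this step is that each $U$ is a $d$-dimensional $p$-adic manifold on which $\varphi$ induces an analytic map of bounded distortion, so the growth of the indices $[U:C_n(\varphi,U)]$ is controlled by at most $d$ expanding directions.

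The heart of the argument is then a uniform bound. The plan is to fix one uniformly powerful reference subgroup $U_0$ of dimension $d$ and to show, via the submultiplicative estimate $[U:A\cap B]\le[U:A]\,[U:B]$ applied to the defining intersection together with the Willis scale $s(\varphi)$ attached to a subgroup tidy for $\varphi$, that for every compact open $U$ one obtains a bound of the shape
\begin{equation*}
[U:C_n(\varphi,U)]\le \kappa\,\Lambda^{\,d n},
\end{equation*}
with constants $\kappa,\Lambda$ depending only on $\varphi$ and on the $p$-adic operator norm of the map induced by $\varphi$ on the tangent space of $U_0$, and \emph{not} on $U$. This yields
\begin{equation*}
\limsup_{n\to\infty}\frac{\log[U:C_n(\varphi,U)]}{n}\le d\,\log\Lambda\le r\,\log\Lambda<\infty
\end{equation*}
uniformly in $U$, and, combined with the first step, gives $\mathsf{h}_{\mathsf{top}}(\varphi)<\infty$.

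I expect the main obstacle to be precisely this uniformity over \emph{all} compact open subgroups rather than over a single tidy one. The scale $s(\varphi)$ captures only the non-compact expanding directions, but, as the two-sided shift on a compact group already shows, a proper compact open subgroup can detect strictly positive entropy that is invisible to the scale; the delicate point is to prove that finite rank caps this compact contribution by the dimension $d$, independently of $U$, and to combine the compact and non-compact contributions through an addition/subadditivity argument of the kind developed in \cite{daf, SVirili}. Controlling the $p$-adic distortion constant $\Lambda$ uniformly, and verifying that nilpotency is what makes the analytic structure and this constant global, is where the real work lies.
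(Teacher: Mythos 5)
Your first step is fine (restricting the supremum to compact open subgroups is exactly \eqref{htoptdlc}, i.e.\ \cite[Proposition 3.4]{daf}), but the heart of your argument --- the uniform estimate $[U:C_n(\varphi,U)]\le\kappa\,\Lambda^{dn}$ with $\kappa,\Lambda$ independent of $U$ --- is stated as a goal rather than proved, and you yourself flag it as ``where the real work lies.'' That estimate \emph{is} the theorem, so as written there is a genuine gap. To see why it is the whole content: for each fixed compact open subgroup $U$ the quantity $\frac{1}{n}\log[U:C_n(\varphi,U)]$ already has a finite limit, bounded by $\log[U:U\cap\varphi^{-1}(U)]$, since the partial indices $[C_n:C_{n+1}]$ are non-increasing; hence finiteness of $\mathsf{h}_{\mathsf{top}}(\varphi)$ is entirely a question about the supremum over $U$. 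The Willis scale, computed on a single tidy subgroup, cannot control that supremum --- your own example of the shift shows positive entropy invisible to the scale --- and nothing in the sketch actually extracts the required uniformity from the Lubotzky--Mann analytic structure (which is not even available in the paper's references). Nilpotency enters your argument only through the unsupported assertion that it ``makes the analytic structure and this constant global,'' whereas it must carry real weight in any proof.

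The paper's argument is more elementary and structurally different: it is an induction on the nilpotency class $c$ along the closed lower central series. For $c=1$ it invokes the classification of locally compact abelian $p$-groups of finite $p$-rank (Theorem~\ref{p-rank-finite}), $G\simeq\mathbb{Z}_p^{\alpha}\times\mathbb{Q}_p^{\beta}\times\mathbb{Z}(p^{\infty})^{\gamma}\times E_p$, for which finiteness of entropy is known factor by factor. For $c>1$ it takes $N=\overline{\gamma_c(G)}$, which is closed, central, abelian of finite $p$-rank and invariant under every automorphism, applies the inductive hypothesis to $G/N$, and glues the two contributions with the Addition Theorem for automorphisms of totally disconnected locally compact groups (Corollary~\ref{mon}~(c)). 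This is exactly the reduction your plan is missing: to make your analytic approach work you would have to bound the supremum over all $U$ by decomposing $G$ into finitely many structurally understood abelian pieces and adding the entropies, at which point you would have reconstructed the paper's proof with considerably heavier machinery. I would suggest either carrying out the induction along the lower central series directly, or, if you wish to keep the analytic viewpoint, first proving an addition theorem that reduces the problem to the abelian layers.
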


We can always find periodic locally compact  $p$-groups $G$ of  $\mathrm{rank}_p(G)=r$ in $\mathfrak E_{<\infty}$, looking at the direct sum $G=\mathbb{Z}^r_p$ of $r$ copies of the additive group of $p$-adic integers $\mathbb{Z}_p$. On the other hand, it is possible to find periodic locally compact $p$-groups of nilpotency class two and of finite $p$-rank, looking at  Heisenberg $p$-groups $\mathbb H_{n}(\mathbb{Q}_p)$ constructed with upper triangular $(n+2) \times (n+2)$  matrices with coefficients in the field of $p$-adic rationals $\mathbb{Q}_p$. These  are   neither abelian nor compact groups, and have finite topological entropy and finite $p$-rank large enough. 

\begin{theorem}[Third Main Theorem]\label{Heisenbergpgroups}
The  Heisenberg group $\mathbb H_{n}(\mathbb{Q}_p)$ is a periodic locally compact nonabelian $p$-group of nilpotency class $2$ with $\mathrm{rank}_p( \mathbb H_{n}(\mathbb{Q}_p)) = 2n$, where $n$ is an arbitrary positive integer. Moreover $ \mathbb H_{n}(\mathbb{Q}_p)$ belongs to  $\mathfrak E_{<\infty}$, but not to $ \mathfrak E_0$.
\end{theorem}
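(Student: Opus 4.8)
The plan is to work in the concrete coordinates $(a,b,c)\in\mathbb{Q}_p^n\times\mathbb{Q}_p^n\times\mathbb{Q}_p$ for $\mathbb{H}_n(\mathbb{Q}_p)$, under which the multiplication reads
\[
(a,b,c)(a',b',c')=(a+a',\,b+b',\,c+c'+a\cdot b').
\]
As a topological space this is $\mathbb{Q}_p^{2n+1}$, so local compactness, total disconnectedness and $G_0=1$ are inherited from $\mathbb{Q}_p$. An induction gives $g^m=(ma,mb,mc+\binom{m}{2}a\cdot b)$; evaluating at $m=p^k$ and using that the $p$-adic valuations of $p^k$ and of $\binom{p^k}{2}$ both tend to $+\infty$ shows $g^{p^k}\to 1$, so $G$ is a $p$-group, while $|m|_p\le 1$ and $|\binom{m}{2}|_p\le 1$ for $m\in\mathbb{Z}$ confine $\{g^m:m\in\mathbb{Z}\}$ to a compact subset, so each $\overline{\langle g\rangle}$ is compact and $G$ is periodic. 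Finally the commutator identity
\[
[(a,b,c),(a',b',c')]=(0,0,\,a\cdot b'-a'\cdot b)
\]
shows at once that $G$ is nonabelian (take $(e_1,0,0)$ and $(0,e_1,0)$) and identifies both the center and the commutator subgroup with $Z=\{(0,0,c)\}\cong\mathbb{Q}_p$, so $G$ has nilpotency class exactly $2$.

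For the $p$-rank I would localize at the compact open subgroup $U=\mathbb{H}_n(\mathbb{Z}_p)$ of entries in $\mathbb{Z}_p$, which is a pro-$p$ group and, as is appropriate for periodic groups (cf. \cite{HHR}), governs $\mathrm{rank}_p(G)$. The Frattini quotient settles the count: since the derived subgroup $[U,U]=Z\cap U$ is contained in $\Phi(U)=\overline{U^p[U,U]}$, one finds $U/\Phi(U)\cong\mathbb{F}_p^{2n}$, whence $U$ is minimally generated by $2n$ elements; explicitly $X_i=(e_i,0,0)$ and $Y_i=(0,e_i,0)$ for $1\le i\le n$ suffice, their commutators $[X_i,Y_i]=(0,0,1)$ topologically generating the central copy of $\mathbb{Z}_p$. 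This gives $\mathrm{rank}_p(\mathbb{H}_n(\mathbb{Q}_p))=2n$; the only subtlety is that the ambient center $\mathbb{Q}_p$ is not topologically finitely generated, so the rank must be read through compact open subgroups rather than through arbitrary closed subgroups.

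To show $G\in\mathfrak{E}_{<\infty}$, note that every $\varphi\in\mathrm{End}(G)$ preserves the characteristic subgroup $Z=[G,G]$ and so induces $\bar\varphi$ on $G/Z\cong\mathbb{Q}_p^{2n}$. The Addition Theorem for topological entropy along the $\varphi$-invariant central extension $1\to Z\to G\to G/Z\to 1$ (cf. \cite{SVirili,BrunoVirili}) yields $\mathsf{h}_{\mathsf{top}}(\varphi)=\mathsf{h}_{\mathsf{top}}(\varphi|_Z)+\mathsf{h}_{\mathsf{top}}(\bar\varphi)$, and each summand is the entropy of a $\mathbb{Q}_p$-linear endomorphism of a finite-dimensional $\mathbb{Q}_p$-vector space, equal to the finite quantity $\sum_{|\lambda|_p>1}\log|\lambda|_p$ over its eigenvalues in $\overline{\mathbb{Q}_p}$; hence $\mathsf{h}_{\mathsf{top}}(\varphi)<\infty$ for all $\varphi$ (the automorphism case being Theorem \ref{oldconj}). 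For $G\notin\mathfrak{E}_0$ it is enough to exhibit one endomorphism of positive entropy: the dilation $\delta(a,b,c)=(p^{-1}a,b,p^{-1}c)$ is an automorphism by the multiplication law, and with $V=\mathbb{H}_n(\mathbb{Z}_p)$ one computes
\[
C_N(\delta,V)=\{(a,b,c):a\in p^{N-1}\mathbb{Z}_p^n,\ b\in\mathbb{Z}_p^n,\ c\in p^{N-1}\mathbb{Z}_p\},
\]
of Haar measure $p^{-(n+1)(N-1)}$, so $\mathsf{h}_{\mathsf{top}}(\delta)\ge(n+1)\log p>0$.

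The hard part will be the Addition Theorem step: one must justify additivity of Hood's entropy across a closed $\varphi$-invariant subgroup in this non-abelian, non-compact, totally disconnected setting, and pin down the value $\mathsf{h}_{\mathsf{top}}=\sum_{|\lambda|_p>1}\log|\lambda|_p$ for $\mathbb{Q}_p$-linear maps. Once these two analytic inputs are in place, the structural, rank and positive-entropy computations above are elementary consequences of the explicit coordinates.
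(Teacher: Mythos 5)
Your structural verifications, the power formula $g^{p^k}\to 1$, and especially the explicit dilation $\delta(a,b,c)=(p^{-1}a,b,p^{-1}c)$ with the cotrajectory computation are correct; in fact that last computation gives a cleaner, self-contained proof that $\mathbb{H}_n(\mathbb{Q}_p)\notin\mathfrak{E}_0$ than the paper, which instead cites an external result for $n=1$ and embeds $\mathbb{H}(\mathbb{Q}_p)$ into $\mathbb{H}_n(\mathbb{Q}_p)$. Your Frattini-quotient computation of the $p$-rank is the alternative route the paper itself sketches after Lemma \ref{prankqp} (the paper's official argument uses the semidirect decomposition $\mathbb{H}_n(\mathbb{Q}_p)=H_1\rtimes K_1$), and is acceptable at the same level of rigor.

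The genuine gap is in the finiteness argument. You invoke, for \emph{every} continuous endomorphism $\varphi$, the Addition Theorem
$\mathsf{h}_{\mathsf{top}}(\varphi)=\mathsf{h}_{\mathsf{top}}(\varphi|_Z)+\mathsf{h}_{\mathsf{top}}(\bar\varphi)$
across $1\to Z\to G\to G/Z\to 1$, citing \cite{SVirili,BrunoVirili}. That statement is not available: for totally disconnected locally compact groups the Addition Theorem of \cite{BrunoVirili} (Corollary \ref{mon}(c) here) holds for \emph{automorphisms} only, and for general endomorphisms of non-compact groups what is known in general is the lower bound $\mathsf{h}_{\mathsf{top}}(\varphi)\geq\mathsf{h}_{\mathsf{top}}(\varphi|_Z)+\mathsf{h}_{\mathsf{top}}(\bar\varphi)$ (Corollary \ref{mon}(a)), which is useless for proving finiteness. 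You flag this as ``the hard part'' but do not close it, so as written the proof of $\mathbb{H}_n(\mathbb{Q}_p)\in\mathfrak{E}_{<\infty}$ is incomplete. The paper's workaround is a case split on $N=\ker\varphi$: if $N=1$, one shows $\varphi|_Z$ and $\bar\varphi_{G/Z}$ are bijective, hence $\varphi$ is surjective, and the Open Mapping Theorem (using that $\mathbb{H}_n(\mathbb{Q}_p)$ is $\sigma$-compact) upgrades $\varphi$ to a topological automorphism, so the automorphism Addition Theorem applies; if $N\neq 1$, one shows $Z\subseteq N$ (a nontrivial closed subgroup of $Z\cong\mathbb{Q}_p$ meets $N$ because $N$ is normal and noncentral elements have nontrivial commutators, and then $Z/(N\cap Z)$ is simultaneously torsion and torsion-free, hence trivial), whereupon Corollary \ref{mon}(b) gives $\mathsf{h}_{\mathsf{top}}(\varphi)=\mathsf{h}_{\mathsf{top}}(\bar\varphi_{G/N})$ with $G/N$ abelian of finite $p$-rank. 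You need one of these two mechanisms (or an equivalent) to replace the unproved Addition Theorem for endomorphisms.
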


Terminology and notations are standard and follow \cite{Fuchs, HHR, hofmor, lg, rz, rob}. After the statement of the main results in Section 1, the theory of slender groups is summarized  in Section 2 from \cite{Fuchs, rob} and some recent results on the finiteness of the topological entropy for periodic locally compact groups are summarized in Section 3 from \cite{AdlerKonheimMcAndrew, B, BrunoVirili, DS, daf, SVirili}. Section 4 is devoted to construct $\mathbb H_{n}(\mathbb{Q}_p)$ and to prove some results on the $p$-rank of these groups. Then we end with the proofs of Theorems \ref{olwethusproof}, \ref{oldconj} and \ref{Heisenbergpgroups} in Section 5.

\section{Previous Results on Slender Groups}\label{section11}

We recall properties of slender groups, originally noted by Nunke, Los and Sasiada, see \cite{Fuchs}.

\begin{lemma}[See \cite{Fuchs}, Chapter 13, \S 2] \label{SlenderGroupsProperties}\
\begin{itemize}
    \item[(i).] Subgroups of slender groups are slender;
\item[(ii).] Slender groups are torsion-free; 
\item[(iii).] $\mathbb{Q}$, $\mathbb{Z}_p$ and $\mathbb{Z}^{\mathbb{N}}$ are  not slender; 
\item[(iv).] A group which is slender cannot contain any subgroup isomorphic to $\mathbb{Q}$, $\mathbb{Z}_p$, or $\mathbb{Z}^{\mathbb{N}}$;
\item[(v).] Direct products of slender groups are slender. In particular,  $\mathbb{Z}^{(\mathbb{N})}$ is slender;
\item[(vi).] A torsion-free abelian group $G$ is slender if for every homomorpshim $f : \mathbb{Z}^{\mathbb{N}} \to G$ the  image $f(\mathbb{Z}^{\mathbb{N}})$ is a discrete finitely generated abelian group.
\end{itemize} 
\end{lemma}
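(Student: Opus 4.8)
The plan is to treat these six properties as a connected package, deriving most of them from the defining lifting condition for slenderness together with a handful of standard structural facts about $\mathbb{Z}^{\mathbb{N}}$. For property (i), I would take a subgroup $H$ of a slender group $G$ and observe that any homomorphism $\alpha\colon\mathbb{Z}^{\mathbb{N}}\to H$ can be composed with the inclusion $H\hookrightarrow G$; since $G$ is slender, $\alpha(e_i)=0$ for almost all $i$, and this condition is intrinsic to the images, so it transfers verbatim to $H$. Property (ii) would follow by contradiction: a nonzero torsion element generates a finite cyclic subgroup, so it suffices to exhibit a homomorphism $\mathbb{Z}^{\mathbb{N}}\to\mathbb{Z}/m\mathbb{Z}$ violating the slenderness condition, e.g. by reducing a suitable nonconvergent sum modulo $m$; alternatively one invokes the stronger statement of (iv) once established.

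For property (iii) I would argue separately for each group. That $\mathbb{Z}^{\mathbb{N}}$ is not slender is immediate from taking $\alpha=\mathrm{id}$, since the identity does not annihilate almost all $e_i$. For $\mathbb{Q}$ and $\mathbb{Z}_p$ the key is their divisibility/completeness properties: I would construct an explicit homomorphism out of $\mathbb{Z}^{\mathbb{N}}$ that is nonzero on infinitely many $e_i$, using that in $\mathbb{Q}$ one can divide and in $\mathbb{Z}_p$ one can form convergent $p$-adic sums $\sum a_i p^i$, so a sequence such as $(a_i)\mapsto\sum a_i p^i$ furnishes the required violating map. Property (iv) is then the contrapositive of (i) combined with (iii): if $G$ contained a copy of any of these three non-slender groups, that subgroup would have to be slender by (i), contradicting (iii).

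Property (v) is the main structural point. For a \emph{finite} direct product, a homomorphism $\mathbb{Z}^{\mathbb{N}}\to\prod_{j=1}^k G_j$ is just a tuple of homomorphisms into each $G_j$, each of which vanishes on almost all $e_i$ by slenderness of $G_j$; a finite union of cofinite-complement sets is again cofinite, so the product map vanishes on almost all $e_i$. The statement that $\mathbb{Z}^{(\mathbb{N})}$ is slender I would deduce by realizing it as a subgroup of a product of copies of $\mathbb{Z}$, using that $\mathbb{Z}$ itself is slender (which is the prototypical Nunke result, provable directly from the lifting condition) and then applying (i); care is needed because this is genuinely an infinite product situation rather than a finite one, so I would lean on the standard Specker/Nunke theorem that arbitrary direct products of slender groups are slender rather than re-proving it from scratch. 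Finally, property (vi) is a reformulation: a homomorphism $f\colon\mathbb{Z}^{\mathbb{N}}\to G$ with discrete finitely generated image factors, on the kernel of the restriction to $\mathbb{Z}^{(\mathbb{N})}$, through a map whose support is finite, and conversely the slenderness condition forces exactly this finiteness of image; I would make this precise by noting that finitely generated torsion-free abelian groups are free of finite rank, so the image condition is equivalent to the coordinatewise vanishing in the definition.

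The main obstacle I anticipate is property (v) in the infinite-product case and its interaction with (vi): the finite case is routine, but the assertion that \emph{arbitrary} products of slender groups are slender is a nontrivial theorem (due to Nunke, building on work of Łoś and Specker) whose proof requires control of the measure-theoretic / Ulam-matrix structure of $\mathbb{Z}^{\mathbb{N}}$. Since the lemma is stated with an explicit citation to \cite{Fuchs}, Chapter 13, I would cite that source for the deep direction rather than reproduce the Specker–Nunke machinery, and concentrate the self-contained argument on the elementary properties (i)--(iv) and the finite part of (v).
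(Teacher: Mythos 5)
The paper offers no proof of this lemma at all --- it is quoted from Fuchs, Chapter 13, \S 2 --- so your proposal has to stand on its own. Parts (i), (iv), the identity-map and $p$-adic arguments in (iii), and the finite case of (v) are correct and standard. The genuine failure is in your treatment of the second half of (v). You propose to prove that $\mathbb{Z}^{(\mathbb{N})}$ is slender by embedding it into $\mathbb{Z}^{\mathbb{N}}$, invoking ``the standard Specker/Nunke theorem that arbitrary direct products of slender groups are slender'', and then applying (i). There is no such theorem, and there cannot be: $\mathbb{Z}^{\mathbb{N}}$ is itself a direct product of copies of the slender group $\mathbb{Z}$, and by (iii) --- via the very identity-map argument you yourself give --- it is \emph{not} slender. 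Your argument therefore proves simultaneously that $\mathbb{Z}^{\mathbb{N}}$ is and is not slender. The theorem you want (and the one in Fuchs that justifies the ``in particular'' clause) is that arbitrary direct \emph{sums} of slender groups are slender; indeed item (v) can only be read consistently with (iii) if ``direct products'' is understood as finite products. So the correct route is: $\mathbb{Z}$ is slender (Specker's theorem), hence the countable direct sum $\mathbb{Z}^{(\mathbb{N})}$ is slender by the direct-sum theorem --- not by any passage through the full product $\mathbb{Z}^{\mathbb{N}}$.

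Two smaller defects. In (ii), your fallback ``invoke (iv)'' is a non sequitur: a torsion group such as $\mathbb{Z}(p)$ contains no copy of $\mathbb{Q}$, $\mathbb{Z}_p$, or $\mathbb{Z}^{\mathbb{N}}$, so (iv) cannot exclude torsion. Your primary idea, ``reducing a suitable nonconvergent sum modulo $m$'', also does not define a homomorphism: infinite sums do not converge in a discrete group. A correct argument reduces to $\mathbb{Z}(p)$ for a prime $p \mid m$, reduces $\mathbb{Z}^{\mathbb{N}}$ coordinatewise onto the $\mathbb{F}_p$-vector space ${(\mathbb{Z}(p))}^{\mathbb{N}}$, and extends the functional $\bar e_i \mapsto 1$ from the linearly independent family ${\{\bar e_i\}}_{i \in \mathbb{N}}$ to the whole space by a basis extension. (Under the definition adopted in this paper, where slenderness is only defined for torsion-free groups, (ii) holds by fiat, which makes this moot here but not in Fuchs's setting.) Similarly, for $\mathbb{Q}$ in (iii), ``one can divide'' should be made precise as: $\mathbb{Q}$ is divisible, hence injective as a $\mathbb{Z}$-module, so the map $\mathbb{Z}^{(\mathbb{N})} \to \mathbb{Q}$ sending every $e_i$ to $1$ extends to all of $\mathbb{Z}^{\mathbb{N}}$; no explicit formula produces such an extension, so the word ``explicit'' in your sketch is misleading.
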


From  \cite{HHR, hofmor, lg, rz}, we may consider a periodic  locally compact $p$-group $G$ (not necessarily abelian)  with $k$  positive integer and introduce the subgroups \begin{equation}\Omega_k(G)=\overline{\langle  g\mid g^{p^k}=1  \rangle} \ \ \mbox{and} \ \ \Omega^k(G)=\langle g^{p^k} \mid g \in G \rangle, \end{equation}  which  are fully invariant in $G$ and satisfy $G/\Omega_k(G)=\Omega^k(G)$. This allows us to introduce also \begin{equation}\mathrm{Div}(G)=\underset{k \in \mathbb{N}}{\bigcap}\Omega^k(G), \end{equation}  which turns out to be useful for various reasons. For instance,  if $G$ is an (discrete) abelian group (not necessarily a periodic locally compact $p$-group),  $\mathrm{Div}(G)$ as above  is still well defined and  we say that $G$ is $ divisible$, if $\mathrm{Div}(G) \supseteq G$, or  that $G$ is $reduced$, if the trivial subgroup of $G$ is the only divisible subgroup of $G$ (see  \cite[Appendix 1, Definition A1.29]{hofmor}). 

    \begin{lemma}[See \cite{hofmor}, Corollary 8.5]\label{Hofmann_Cor8.5} \
    
    For a compact abelian group $G$, the following conditions are  equivalent:
    \begin{itemize}
        \item[(i).] $G$ is totally disconnected;
        \item[(ii).] $\mathrm{Div}(G)=0$;
        \item[(iii).]  $\widehat{G}$   is a torsion group.
    \end{itemize}
    \end{lemma}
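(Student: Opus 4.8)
The final statement is Lemma~\ref{Hofmann_Cor8.5}, the equivalence for a compact abelian group $G$ of: (i) $G$ is totally disconnected; (ii) $\mathrm{Div}(G)=0$; (iii) $\widehat G$ is a torsion group. The plan is to prove the cycle of implications by transporting everything to the Pontryagin dual, where total disconnectedness, divisibility, and torsion become dual notions that are easy to compare. The central tool is the duality dictionary: for a compact abelian group $G$ with discrete dual $\widehat G$, connectedness of $G$ corresponds to torsion-freeness of $\widehat G$, and total disconnectedness of $G$ corresponds to $\widehat G$ being a torsion group (this is the classical correspondence between the connected component $G_0$ and the annihilator of the torsion subgroup of $\widehat G$). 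So the equivalence $(i)\Leftrightarrow(iii)$ is essentially the statement that the connected component of the identity $G_0$ is the annihilator $t(\widehat G)^\perp$ of the torsion subgroup $t(\widehat G)$, whence $G_0=1$ iff $t(\widehat G)^\perp=1$ iff $t(\widehat G)=\widehat G$, i.e. $\widehat G$ is torsion.

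First I would establish $(i)\Leftrightarrow(iii)$ directly. Using Pontryagin duality, $G$ is totally disconnected iff its identity component $G_0$ is trivial; and $G_0$ is the annihilator of $t(\widehat G)$, so $G_0=1$ exactly when $t(\widehat G)=\widehat G$. Concretely, an element $\chi\in\widehat G$ lies in the annihilator of $G_0$ iff $\chi$ factors through the totally disconnected quotient $G/G_0$, and characters of a totally disconnected compact group have finite order; conversely a character of finite order annihilates the (divisible, connected) subgroup $G_0$. This identifies $\mathrm{Ann}(G_0)$ with $t(\widehat G)$ and gives $G_0 = t(\widehat G)^{\perp}$, from which $(i)\Leftrightarrow(iii)$ follows.

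Next I would handle the role of $\mathrm{Div}(G)$. Here I would exploit the definition given in the excerpt, $\mathrm{Div}(G)=\bigcap_{k}\Omega^k(G)$ with $\Omega^k(G)=\langle g^{p^k}\mid g\in G\rangle$, interpreting it in the compact abelian setting as the largest divisible subgroup of $G$ (the maximal divisible subgroup). For compact abelian groups the maximal divisible subgroup coincides with the identity component $G_0$: a compact connected abelian group is divisible, so $G_0\subseteq \mathrm{Div}(G)$, while conversely any divisible subgroup is connected since its dual is torsion-free, forcing $\mathrm{Div}(G)\subseteq G_0$. Dualizing, a divisible compact group has torsion-free dual and a reduced (zero divisible part) compact group has a dual with no nonzero divisible quotient; combining these identifications yields $\mathrm{Div}(G)=G_0$, so $(ii)\Leftrightarrow(i)$ is immediate. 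Chaining $(i)\Leftrightarrow(ii)$ with $(i)\Leftrightarrow(iii)$ closes the loop.

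The main obstacle I anticipate is pinning down the precise duality statement $G_0=t(\widehat G)^{\perp}$ and the identification $\mathrm{Div}(G)=G_0$ without circular appeal to the very corollary being proved; the cleanest route is to cite the structural duality results already quoted in the paper from \cite{hofmor} (the correspondence between the connected component and the torsion subgroup of the dual, and the divisible/torsion-free duality). Once those two identifications are in place, the three-way equivalence is a formal consequence, so essentially all the work is in the character-theoretic computation establishing that annihilators of connected components are exactly torsion subgroups of the dual and that the maximal divisible subgroup equals the identity component.
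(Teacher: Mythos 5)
The paper does not actually prove this lemma: it is imported verbatim from Hofmann--Morris (``See \cite{hofmor}, Corollary 8.5''), so there is no internal argument to compare against; your proposal reconstructs the standard duality proof of that corollary, and it is essentially correct. Both of your key identifications are the right ones and are non-circular: $\mathrm{Ann}_{\widehat G}(G_0)=t(\widehat G)$ (a character of finite order kills the divisible group $G_0$; conversely a character trivial on $G_0$ factors through the profinite group $G/G_0$ and hence has finite image) gives (i)$\Leftrightarrow$(iii) after taking annihilators back in $G$, and $\mathrm{Div}(G)=G_0$ gives (i)$\Leftrightarrow$(ii), resting on the equivalence ``compact abelian connected $\Leftrightarrow$ divisible'' which in \cite{hofmor} precedes the corollary. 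The one step you should tighten is the inclusion $\mathrm{Div}(G)\subseteq G_0$: you say a divisible subgroup ``is connected since its dual is torsion-free'', but a divisible subgroup $D$ need not be closed, so it is not itself a compact group and has no discrete dual to invoke. Fix this by passing to the closure: $n\overline{D}$ is the image of a compact set under a continuous map, hence closed, and it contains $nD=D$, so $n\overline{D}\supseteq\overline{D}$ for every $n$; thus $\overline{D}$ is a compact divisible (hence connected) subgroup, so $D\subseteq\overline{D}\subseteq G_0$. Alternatively, one can avoid closures altogether by noting $D=nD\subseteq nG$ for all $n$, and $\bigcap_n nG=\mathrm{Ann}_G\bigl(t(\widehat G)\bigr)=G_0$ since each $nG$ is compact, hence closed. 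With either repair the three-way equivalence closes exactly as you describe.
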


Nunke and Sasiada  \cite[Chapter 13, \S 2]{Fuchs} 
showed that slender groups cannot be divisible. The reader can refer also to \cite[Exercise 4.4.9]{rob} and their result is summarized below.
    
    \begin{lemma}[See \cite{Fuchs}, Lemma 2.3, Sasiada's Theorem ] \label{slender_reduced}
    An abelian  group which is slender must be reduced. In addition,  if the group is countable, then the condition of being reduced is necessary and sufficient to conclude that the group is slender.
    \end{lemma}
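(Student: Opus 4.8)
\emph{Slender implies reduced.} The statement has two halves, and I would treat them separately. For the first, recall that a slender group $G$ is torsion-free by Lemma~\ref{SlenderGroupsProperties}(ii), so every subgroup of $G$ is torsion-free. Let $D \le G$ be any divisible subgroup; then $D$ is torsion-free and divisible, hence a $\mathbb{Q}$-vector space, i.e. a direct sum of copies of $\mathbb{Q}$ by the structure theorem for divisible abelian groups. If $D \neq 0$ it contains a copy of $\mathbb{Q}$, which contradicts Lemma~\ref{SlenderGroupsProperties}(iv), forbidding a slender group from containing $\mathbb{Q}$. Hence the only divisible subgroup of $G$ is trivial, i.e. $G$ is reduced. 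This argument uses no countability hypothesis, so it already supplies the \emph{necessity} half of the second sentence: a slender group is always reduced.

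For the \emph{sufficiency} in the countable case I would invoke Nunke's characterization, the structural criterion underlying Lemma~\ref{SlenderGroupsProperties}(iii)--(iv) and the content of the cited Fuchs reference, namely that a torsion-free abelian group is slender if and only if it contains no subgroup isomorphic to $\mathbb{Q}$, to $\mathbb{Z}_p$, or to the Baer--Specker group $\mathbb{Z}^{\mathbb{N}}$. Let $G$ be countable, torsion-free and reduced. Reducedness rules out a copy of $\mathbb{Q}$ exactly as in the first paragraph. The remaining two obstructions are excluded on cardinality grounds: both $\mathbb{Z}_p$ and $\mathbb{Z}^{\mathbb{N}}$ have cardinality $2^{\aleph_0}$, so a group of cardinality at most $\aleph_0$ can contain neither. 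By the characterization, $G$ is slender, which is the \emph{sufficiency} direction.

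The only genuinely deep ingredient is Nunke's characterization itself, i.e. the nontrivial fact that the three test groups $\mathbb{Q}$, $\mathbb{Z}_p$ and $\mathbb{Z}^{\mathbb{N}}$ already detect every failure of slenderness; I would take this as given from \cite{Fuchs}, since reproving it is the substance of Sasiada's and Nunke's work, and everything else is bookkeeping. One point I would flag explicitly, to keep the biconditional honest, is that slenderness is defined only for torsion-free groups, so the equivalence ``reduced if and only if slender'' in the countable case is understood inside the class of torsion-free abelian groups. Dropping torsion-freeness breaks sufficiency, since a nontrivial finite group is reduced but, having torsion, is not slender; thus torsion-freeness must be carried as a standing hypothesis throughout.
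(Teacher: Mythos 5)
The paper gives no proof of this lemma: it is imported verbatim from Fuchs as Sasiada's Theorem, so there is nothing internal to compare against. Your argument is correct. The first half is the standard one: a divisible subgroup of a torsion-free group is a $\mathbb{Q}$-vector space, so if nonzero it contains a copy of $\mathbb{Q}$, which Lemma~\ref{SlenderGroupsProperties}(iv) forbids in a slender group; hence slender implies reduced, with no countability needed. For sufficiency in the countable case you invoke Nunke's characterization (slender iff no subgroup isomorphic to $\mathbb{Q}$, $\mathbb{Z}_p$ or $\mathbb{Z}^{\mathbb{N}}$) together with the cardinality bound $|\mathbb{Z}_p|=|\mathbb{Z}^{\mathbb{N}}|=2^{\aleph_0}$; this is logically sound, but note that the paper's Lemma~\ref{SlenderGroupsProperties} records only the easy ``only if'' half of Nunke's theorem as item (iv), so the direction you actually need is an additional deep import from Fuchs, and historically it is proved \emph{using} Sasiada's theorem rather than the other way around --- you are replacing the cited black box by one of at least equal depth. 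If you prefer a derivation from results the paper does state, combine Theorem~\ref{Characterization_of_SlenderGroups} (a reduced torsion-free group is slender iff it is not in $L\Omega_1$) with Theorem~\ref{lineartopologies} (for torsion-free groups, membership in $L\Omega_1$ amounts to containing a copy of $\mathbb{Z}_p$ or $\mathbb{Z}^{\mathbb{N}}$); your cardinality observation then finishes the countable case in exactly the same way. Your closing caveat that torsion-freeness must be carried as a standing hypothesis for the ``sufficient'' direction is correct and worth keeping, since a nontrivial finite group is reduced but not slender.
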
 
    
    Following the discussion in \cite[Chapter 1, \S 7]{Fuchs} and \cite[\S 1]{DeMarcoOrsatti1974}, we may consider an abelian group $G$ and a
   filter  $\mathcal{F}$ in the subgroups lattice $\mathrm{L}(G)$ of $G$. Automatically $\mathcal{F}$ defines a topology on $G$, if we declare $  \mathcal{B}=\{U  \mid U \in  \mathcal{F} \}$  
 to be a basis of open neighborhoods at the identity of $G$ 
and if for every $g \in G$ the cosets  $g\mathcal{B}=\{gU \mid U \in \mathcal{B}\}$
 form a basis of open neighborhoods at $g\in G$.  This topology is said to be  a \textit{linear topology} on $G$ (or more precisely a \textit{linear} $\mathcal{F}$-\textit{topology} on $G$). \textit{Linear groups} (in the sense of Orsatti and De Marco) are  abelian groups with linear topologies. A linear group $G$ is  \textit{complete}, if it is Hausdorff and every Cauchy net in $G$ has a limit in $G$. De Marco and Orsatti \cite{DeMarcoOrsatti1974} studied  Hausdorff linear groups: 
 
 \begin{definition}[See \cite{DeMarcoOrsatti1974}]\label{lomega}An abelian group $G$ belongs to the class $L\Omega$ if it admits a \textit{linear complete and nondiscrete, Hausdorff topology}. We say that $G$ belongs to the class $L\Omega_{1}$, if it belongs to $L\Omega$ and in addition its topology is metrizable. \end{definition}

 In fact the conditions of Definition \ref{lomega} are not verified simultaneously, that is, there are abelian linear groups which are not Hausdorff, or abelian linear groups which are not complete and so on. Of course,  abelian groups in $L\Omega_1$ are also in $L\Omega$, but examples can show that the viceversa is false.

   \begin{theorem}[See \cite{DeMarcoOrsatti1974}, Theorem 2.3]\label{lineartopologies}
     A torsion-free abelian group  possesses  a metrizable linear  complete nondiscrete topology if and only if it contains a copy of $\mathbb{Z}_{p},$ or of $\mathbb{Z}^{\mathbb{N}}$ as subgroup.
   \end{theorem}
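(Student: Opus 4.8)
The plan is to prove both implications by working directly with a countable neighborhood basis of subgroups. A metrizable linear topology on an abelian group is the same datum as a descending chain $G = U_0 \supseteq U_1 \supseteq U_2 \supseteq \cdots$ of subgroups declared to be a basis of neighborhoods of $0$; in this language the Hausdorff condition reads $\bigcap_n U_n = 0$, nondiscreteness reads $U_n \neq 0$ for every $n$ (otherwise $\{0\}$ would be open), and completeness means that the canonical map $G \to \varprojlim_n G/U_n$ is an isomorphism. After passing to a subsequence I may assume the chain is strictly decreasing, since Hausdorffness together with nondiscreteness forces infinitely many proper inclusions (an eventually constant chain would have common value $\bigcap_n U_n = 0$).

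For the easy implication, suppose $G$ contains a subgroup $H$ isomorphic to $\mathbb{Z}_p$ or to $\mathbb{Z}^{\mathbb{N}}$, and let $\{W_n\}$ be the standard neighborhood basis of $H$ (the subgroups $p^n\mathbb{Z}_p$, respectively the subgroups of sequences vanishing in their first $n$ coordinates). I would topologize $G$ by declaring the $W_n$, viewed as subgroups of $G$, to be a basis of neighborhoods of $0$. This is a nondiscrete, metrizable, Hausdorff linear topology because $\bigcap_n W_n = 0$ and every $W_n \neq 0$. The only point needing an argument is completeness: given a coherent sequence $(g_n)$ representing an element of $\varprojlim_n G/W_n$, the consecutive differences satisfy $g_{n+1}-g_n \in W_n \subseteq H$, so after subtracting $g_1$ the whole sequence lives in $H$ and is Cauchy there; completeness of $H$ then produces a limit in $H$, hence a preimage in $G$. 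Thus $G \to \varprojlim_n G/W_n$ is surjective and $G$ is complete.

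The substance is the forward implication, which I would split according to the behaviour of cyclic subgroups in the induced topology. If there exists $0 \neq x \in G$ whose cyclic subgroup is nondiscrete, i.e. $\langle x\rangle \cap U_n \neq 0$ for all $n$, I write $\langle x\rangle \cap U_n = m_n\langle x\rangle$ with $m_n \mid m_{n+1}$ and $m_n \to \infty$ (the latter since $\bigcap_n(\langle x\rangle \cap U_n)=0$). As $G$ is complete and Hausdorff, the closure $\overline{\langle x\rangle}$ is the completion of $\langle x\rangle$, namely $\varprojlim_n \mathbb{Z}/m_n\mathbb{Z} \cong \prod_q \mathbb{Z}/q^{e_q}\mathbb{Z}$ with $e_q = \sup_n v_q(m_n)$. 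Here torsion-freeness of $G$ is decisive: it forces every $e_q \in \{0,\infty\}$, while $m_n \to \infty$ forces at least one $e_q = \infty$, so $\overline{\langle x\rangle}$ and hence $G$ contains a copy of $\mathbb{Z}_p$. In the complementary case every nonzero cyclic subgroup is discrete, and I would build $\mathbb{Z}^{\mathbb{N}}$ by hand: choose nonzero $x_k \in U_{n_k}$ with $n_1 < n_2 < \cdots$ selected recursively so that $n_{k+1}$ exceeds a level $N_k$ with $\langle x_k\rangle \cap U_{N_k} = 0$ (such $N_k$ exists by discreteness, and a nonzero $x_k$ by nondiscreteness). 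Since $x_k \to 0$ rapidly, every series $\sum_k a_k x_k$ with $(a_k) \in \mathbb{Z}^{\mathbb{N}}$ is Cauchy and converges, defining a homomorphism $\Phi \colon \mathbb{Z}^{\mathbb{N}} \to G$; if $\sum_k a_k x_k = 0$ with leading nonzero coefficient $a_j$, then $a_j x_j = -\sum_{k>j} a_k x_k \in U_{n_{j+1}} \subseteq U_{N_j}$, so $a_j x_j \in \langle x_j\rangle \cap U_{N_j} = 0$ and torsion-freeness gives $a_j=0$, a contradiction, so $\Phi$ is injective.

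I expect the recursive construction in the last case to be the main obstacle: one must interleave the choice of each $x_k$ (nonzero and deep in the filtration) with the determination of the discreteness level $N_k$ governing the next index, and verify that the series converges uniformly enough that each tail truly lands in $U_{N_j}$. Torsion-freeness enters at exactly two points — ruling out finite nonzero $q$-components in the first case and killing the leading coefficient in the second — so I would isolate those two uses and otherwise keep the bookkeeping of the filtration levels as the only delicate part.
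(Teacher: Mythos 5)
The paper does not contain a proof of Theorem \ref{lineartopologies}: it is imported as a black box from De Marco and Orsatti \cite{DeMarcoOrsatti1974}, so there is no internal argument to compare yours against. Judged on its own, your proof is correct and complete in outline, and it is essentially the classical argument for this characterization. The backward direction is fine: since every $W_n$ is contained in $H$, a coherent sequence in $\varprojlim_n G/W_n$ differs from its first term by elements of $H$, and completeness of $\mathbb{Z}_p$ (resp.\ of $\mathbb{Z}^{\mathbb{N}}$ in the product topology) finishes it. In the forward direction the case split on whether some nonzero cyclic subgroup is nondiscrete is the right dichotomy, and torsion-freeness is used exactly at the two points you identify. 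Three steps deserve an explicit word in a written-up version: (i) in Case 1, the identification $\overline{\langle x\rangle}\cong\varprojlim_n \mathbb{Z}/m_n\mathbb{Z}$ rests on the standard fact that the closure of a subgroup of a complete Hausdorff group is the completion of that subgroup in the induced uniformity, and for a countable linear filtration that completion is the stated inverse limit; (ii) the splitting $\varprojlim_n\mathbb{Z}/m_n\mathbb{Z}\cong\prod_q \varprojlim_n\mathbb{Z}/q^{v_q(m_n)}\mathbb{Z}$ is where the conclusion ``some $e_q=\infty$, hence a $\mathbb{Z}_q$ direct factor'' lives, so state it; (iii) in Case 2, the assertion that the tail $\sum_{k>j}a_kx_k$ lies in $U_{n_{j+1}}$ needs not only that all partial sums of the tail lie in the subgroup $U_{n_{j+1}}$ but also that $U_{n_{j+1}}$ is closed --- which holds because open subgroups of topological groups are closed. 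With those three remarks made explicit, the bookkeeping of levels $n_1<N_1<n_2<N_2<\cdots$ is exactly as delicate as you anticipate and no more, and the proof goes through.
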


Note  that   all groups of $L\Omega_{1}$ are classified by Theorem \ref{lineartopologies}. Moreover Lemma \ref{SlenderGroupsProperties} shows that both $\mathbb{Z}_{p}$ and $\mathbb{Z}^{\mathbb{N}}$ are not slender, hence  Theorem \ref{lineartopologies} implies that  $G$ cannot be slender, if it is possible to endow $G$ of a metrizable linear complete nondiscrete topology. This  is reported  below: 
   
   \begin{theorem}[See \cite{DeMarcoOrsatti1974}, De Marco and Orsatti]\label{Characterization_of_SlenderGroups}
     Let $G$ be a reduced torsion-free abelian group. Then $G$ is slender if and only if $G$ does not belong to $ L\Omega_{1}$.
   \end{theorem}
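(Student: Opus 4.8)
The plan is to reduce the topological hypothesis to a purely algebraic one using Theorem~\ref{lineartopologies}, and then to recognize the resulting statement as the classical Nunke--Sasiada slenderness criterion. Since $G$ is assumed torsion-free, Theorem~\ref{lineartopologies} says precisely that $G\in L\Omega_1$ if and only if $G$ contains a subgroup isomorphic to $\mathbb{Z}_p$ for some prime $p$, or to $\mathbb{Z}^{\mathbb{N}}$. Hence the whole assertion is equivalent to the claim that a reduced torsion-free abelian group $G$ is slender if and only if it contains no copy of any $\mathbb{Z}_p$ and no copy of $\mathbb{Z}^{\mathbb{N}}$, and this is what I would establish.

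First I would dispatch the forward implication. If $G$ is slender, then Lemma~\ref{SlenderGroupsProperties}(iv) forbids any subgroup isomorphic to $\mathbb{Q}$, $\mathbb{Z}_p$, or $\mathbb{Z}^{\mathbb{N}}$; in particular $G$ contains neither $\mathbb{Z}_p$ nor $\mathbb{Z}^{\mathbb{N}}$, so by the translation above $G\notin L\Omega_1$. This direction uses only Lemma~\ref{SlenderGroupsProperties} and does not even require reducedness.

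The substance lies in the converse. Assume $G$ is reduced, torsion-free, and $G\notin L\Omega_1$, so by the translation it contains no $\mathbb{Z}_p$ and no $\mathbb{Z}^{\mathbb{N}}$. I would first note that reducedness eliminates the remaining test group: a copy of $\mathbb{Q}$ inside $G$ would be a nontrivial divisible subgroup, contradicting the definition of reduced, so $G$ contains none of $\mathbb{Q}$, $\mathbb{Z}_p$, $\mathbb{Z}^{\mathbb{N}}$. To conclude that $G$ is slender I would apply the sufficient condition of Lemma~\ref{SlenderGroupsProperties}(vi) and show that every homomorphism $f:\mathbb{Z}^{\mathbb{N}}\to G$ has finitely generated (discrete) image. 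It is worth observing that in the countable case one can shortcut entirely: a countable reduced torsion-free group is slender by Lemma~\ref{slender_reduced}, and such a group cannot lie in $L\Omega_1$ since $\mathbb{Z}_p$ and $\mathbb{Z}^{\mathbb{N}}$ are uncountable; so the genuinely new work is confined to uncountable $G$.

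The hard part will be precisely this verification of Lemma~\ref{SlenderGroupsProperties}(vi) in the uncountable case: proving that the absence of copies of $\mathbb{Q}$, $\mathbb{Z}_p$, and $\mathbb{Z}^{\mathbb{N}}$ forces every homomorphic image of the Baer--Specker group $\mathbb{Z}^{\mathbb{N}}$ in $G$ to be finitely generated. This is the structural heart of Nunke's theorem underlying the De Marco--Orsatti characterization: one argues that a homomorphism out of $\mathbb{Z}^{\mathbb{N}}$ with non--finitely--generated image must detect one of the three ``large'' subgroups inside $G$, contradicting the hypotheses. I expect this dichotomy for maps out of $\mathbb{Z}^{\mathbb{N}}$, rather than the topological reformulation, to be the real obstacle.
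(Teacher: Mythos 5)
Your translation of the statement via Theorem~\ref{lineartopologies} and your forward direction coincide exactly with the argument the paper gives in the text preceding the theorem: $G\in L\Omega_1$ forces a copy of $\mathbb{Z}_p$ or $\mathbb{Z}^{\mathbb{N}}$, neither of which is slender, so Lemma~\ref{SlenderGroupsProperties} (parts (i) and (iii)/(iv)) rules out slenderness. Your observation that this direction does not need reducedness, and your shortcut for countable $G$ via Sasiada's theorem (Lemma~\ref{slender_reduced}) together with the uncountability of $\mathbb{Z}_p$ and $\mathbb{Z}^{\mathbb{N}}$, are both correct.

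The genuine gap is the converse, and you have named it yourself: after reducing to the claim that a torsion-free group containing no copy of $\mathbb{Q}$, $\mathbb{Z}_p$, or $\mathbb{Z}^{\mathbb{N}}$ is slender, you appeal to a ``dichotomy for maps out of $\mathbb{Z}^{\mathbb{N}}$'' that you do not establish. That dichotomy \emph{is} Nunke's theorem; it is not a routine verification of Lemma~\ref{SlenderGroupsProperties}(vi) but a substantial structural argument about homomorphisms out of the Baer--Specker group (resting on Specker-type analysis of $\mathrm{Hom}(\mathbb{Z}^{\mathbb{N}},\mathbb{Z})$ and a delicate construction of one of the three test subgroups inside a non-finitely-generated image). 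As a self-contained proof your proposal therefore does not close. That said, you should know that the paper does not close it either: the statement is attributed to De Marco and Orsatti, and the paper's own ``proof'' consists precisely of your easy direction plus the citation for the hard one. So your proposal matches the paper's level of rigor and, usefully, locates exactly where the uncited mathematics lives; but if a complete argument is required, the converse must either be proved via Nunke's dichotomy or explicitly delegated to \cite{DeMarcoOrsatti1974} and \cite{Fuchs} as the paper does.
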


Thanks to what we have seen until now :

\begin{lemma}\label{NoCompactSlenderGroups}
  There are no nontrivial compact abelian slender groups.
\end{lemma}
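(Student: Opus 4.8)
The plan is to argue by contradiction, extracting from a hypothetical nontrivial compact abelian slender group a subgroup that slenderness forbids. So I would begin by supposing that $G$ is a nontrivial compact abelian group whose underlying abstract group is slender. By Lemma \ref{SlenderGroupsProperties}(ii) this group is torsion-free, and by Lemma \ref{slender_reduced} (Sasiada's Theorem) it is reduced.

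First I would exploit reducedness to pin down the topology of $G$. Since $G$ is reduced, its maximal divisible subgroup $\mathrm{Div}(G)$ is trivial. Feeding $\mathrm{Div}(G)=0$ into the implication (ii)$\Rightarrow$(i) of Lemma \ref{Hofmann_Cor8.5} (Hofmann's Corollary 8.5) forces $G$ to be totally disconnected, hence a profinite abelian group. This step disposes of the connected part of $G$ in one stroke: a nontrivial connected compact abelian group is divisible, so it could never sit inside a reduced group, and Lemma \ref{Hofmann_Cor8.5} packages exactly this obstruction, sparing me a separate case analysis on the identity component.

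Next I would produce a forbidden subgroup inside the profinite group $G$. Being nontrivial and torsion-free, $G$ is infinite, since a finite torsion-free abelian group is trivial; so I may choose $x\in G$ with $x\neq 0$, and torsion-freeness makes $x$ of infinite order. The closed subgroup $\overline{\langle x\rangle}$ is then an infinite torsion-free procyclic profinite group. Decomposing the profinite abelian group $\overline{\langle x\rangle}$ into its pro-$p$ components and selecting a nontrivial one $C_p$, every nonzero element of the torsion-free pro-$p$ group $C_p$ topologically generates a copy of $\mathbb{Z}_p$. Thus $G$ contains a subgroup isomorphic, as an abstract group, to $\mathbb{Z}_p$ for some prime $p$.

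This is the desired contradiction: by Lemma \ref{SlenderGroupsProperties}(iv) a slender group cannot contain a copy of $\mathbb{Z}_p$. Equivalently, once $\mathbb{Z}_p\leq G$ one may invoke Theorem \ref{lineartopologies} to place the reduced torsion-free group $G$ in the class $L\Omega_{1}$ and then contradict slenderness through Theorem \ref{Characterization_of_SlenderGroups}. Hence no such nontrivial $G$ exists. The only delicate point, and the step I expect to require the most care, is the structural claim that an infinite torsion-free procyclic profinite abelian group necessarily contains $\mathbb{Z}_p$; everything else is a direct assembly of the cited lemmas. I would justify this via the canonical description of a procyclic group as a quotient of $\widehat{\mathbb{Z}}=\prod_{p}\mathbb{Z}_p$, observing that torsion-freeness rules out all finite cyclic factors and therefore leaves at least one $\mathbb{Z}_p$ summand.
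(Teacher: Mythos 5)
Your proof is correct and follows essentially the same route as the paper: use Sasiada's theorem together with Lemma \ref{Hofmann_Cor8.5} to force $G$ to be totally disconnected, hence profinite, and then exhibit a copy of $\mathbb{Z}_p$ forbidden by Lemma \ref{SlenderGroupsProperties}(iv). If anything, your version is more complete, since the paper merely asserts that profinite abelian groups are not slender (citing $\mathbb{Z}_p$ as ``a counterexample''), whereas you actually extract a subgroup isomorphic to $\mathbb{Z}_p$ from the procyclic closure of a nonzero element of the nontrivial torsion-free profinite group $G$.
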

\begin{proof}
     Assume that $G$ is a compact abelian slender group. Lemma \ref{slender_reduced} along with Lemma \ref{Hofmann_Cor8.5} (a) and (b) imply that $G$ is totally disconnected. Then $G$ should be profinite by \cite[Theorem 1.34]{hofmor}, hence  projective limit of finite groups. Profinite abelian groups are not slender;  $\mathbb{Z}_{p}$ is a counterexample. From the contradiction,  there are no nontrivial compact slender groups.
\end{proof}

\section{Previous Results  for Periodic Locally Compact  Groups}\label{sechtop}

When we have  a totally disconnected locally compact group $G$, van Dantzig  \cite{vD} proved that 
\begin{equation}
    \mathcal{U}(G)=\{V\leq G\mid \text{$V$ compact and open}\}\end{equation}
is contained in $
\mathcal{CT}(G)$ and is  local basis. From \cite[Proposition 3.4 ]{daf}, we have that
\begin{equation}\label{htoptdlc}
\mathsf{h}_{\mathsf{top}}(\varphi)=\sup\Big\{\lim_{n\to\infty} \left(\frac{\log |V:C_{n}(\varphi,V)|}{n}\right) \ \ \Big| \ \ V \in \mathcal{U}(G) \Big\},
\end{equation}
where $C_{n}(\varphi,V)\in\mathcal{U}(G)$ and the index $|V:C_n(\varphi,V)|$ is finite. In fact, the set  $\mathsf{E}_{\mathsf{top}}(G)$ turns out to be   a countable subset of the real half-line in this situation.

Some relevant facts are reported below. The first regards discrete groups.

\begin{remark}[See \cite{daf}, Remark 2.4]\label{discrete=0} 
Discrete groups belong to $\mathfrak E_0$.
\end{remark}

The second regards the additive group of $p$-adic integers.

\begin{corollary}[See \cite{daf}, Corollary 2.2]\label{invbase} 
Let $G$ be a locally compact group and $\varphi\in\mathrm{End}(G)$. If $\mathcal{S}\subseteq \mathcal{CT}(G)$ is a local basis of $G$ and $\mathcal{S}$ is realized by $\varphi$-invariant subgroups, then $\mathsf{h}_{\mathsf{top}}(\varphi)=0$. In particular, this applies to $\mathbb{Z}_p^n$, hence $\mathbb{Z}_p^n\in\mathfrak{E}_0$.
\end{corollary}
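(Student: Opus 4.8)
The plan is to exploit the fact that $\varphi$-invariance forces the cotrajectories to collapse. First I would record that if $V\in\mathcal{S}$ is a $\varphi$-invariant subgroup, i.e. $\varphi(V)\subseteq V$, then $V\subseteq\varphi^{-1}(V)$ and, iterating, $V\subseteq\varphi^{-k}(V)$ for every $k\geq 0$. Consequently the defining intersection telescopes,
\[
C_n(\varphi,V)=V\cap\varphi^{-1}(V)\cap\cdots\cap\varphi^{-n+1}(V)=V\qquad\text{for all }n\in\mathbb{N},
\]
so that $\mu(C_n(\varphi,V))=\mu(V)$ is a positive constant and
\[
\limsup_{n\to\infty}\frac{-\log\mu(C_n(\varphi,V))}{n}=\limsup_{n\to\infty}\frac{-\log\mu(V)}{n}=0 .
\]
Thus every member of the local basis $\mathcal{S}$ contributes $0$ to the supremum defining $\mathsf{h}_{\mathsf{top}}(\varphi)$.

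Next I would reduce the supremum over all of $\mathcal{CT}(G)$ to the supremum over $\mathcal{S}$. Writing $H(V)=\limsup_{n\to\infty}\bigl(-\log\mu(C_n(\varphi,V))/n\bigr)$, the key monotonicity is that $W\subseteq V$ gives $\varphi^{-k}(W)\subseteq\varphi^{-k}(V)$, hence $C_n(\varphi,W)\subseteq C_n(\varphi,V)$, whence $\mu(C_n(\varphi,W))\leq\mu(C_n(\varphi,V))$ and therefore $H(W)\geq H(V)$. Since $\mathcal{S}$ is a local basis, each $V\in\mathcal{CT}(G)$ contains some $W\in\mathcal{S}$; combining this with monotonicity yields $H(V)\leq H(W)\leq\sup_{W\in\mathcal{S}}H(W)=0$. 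Taking the supremum over $\mathcal{CT}(G)$, and noting that the members of $\mathcal{S}$ already realize the value $0$, gives $\mathsf{h}_{\mathsf{top}}(\varphi)=0$.

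For the special case $\mathbb{Z}_p^n$ I would take $\mathcal{S}=\{p^k\mathbb{Z}_p^n\mid k\in\mathbb{N}\}$, a local basis of compact open subgroups. Every $\varphi\in\mathrm{End}(\mathbb{Z}_p^n)$ is continuous and additive, hence $\mathbb{Z}_p$-linear, i.e. multiplication by a matrix $A\in M_n(\mathbb{Z}_p)$; then $\varphi(p^k\mathbb{Z}_p^n)=p^k A\,\mathbb{Z}_p^n\subseteq p^k\mathbb{Z}_p^n$, so each member of $\mathcal{S}$ is $\varphi$-invariant. The general statement then forces $\mathsf{h}_{\mathsf{top}}(\varphi)=0$ for every $\varphi$, i.e. $\mathbb{Z}_p^n\in\mathfrak{E}_0$. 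The main obstacle is the reduction step: one must be careful that the supremum defining the entropy is genuinely unchanged when restricted to a cofinal local basis, which rests precisely on the correct direction of the monotonicity of $H$ under inclusion; the remaining steps are a direct unwinding of the definitions.
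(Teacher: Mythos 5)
Your proposal is correct. Note, however, that the paper does not prove this statement at all: it is imported verbatim from the literature (Corollary 2.2 of the cited work of Dikranjan, Giordano Bruno and Russo), so there is no in-paper argument to compare against. What you supply is a complete, self-contained proof along the standard lines: (i) $\varphi(V)\subseteq V$ forces $V\subseteq\varphi^{-k}(V)$ for all $k$, so the cotrajectory collapses to $C_n(\varphi,V)=V$ and, since $V$ is a compact neighborhood of the identity, $0<\mu(V)<\infty$ makes the limsup vanish; (ii) the cofinality reduction is handled with the monotonicity in the correct direction ($W\subseteq V$ gives $H(W)\geq H(V)$, hence $H(V)\leq H(W)=0$ for $W\in\mathcal{S}$ inside $V$), which is exactly the point where such arguments usually go wrong and yours does not; (iii) the application to $\mathbb{Z}_p^n$ is sound --- continuity plus additivity and the density of $\mathbb{Z}$ in $\mathbb{Z}_p$ give $\mathbb{Z}_p$-linearity, though in fact plain additivity already yields $\varphi(p^k\mathbb{Z}_p^n)\subseteq p^k\mathbb{Z}_p^n$, so the linearity step is a convenience rather than a necessity. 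One could add the one-line remark that each $H(V)\geq 0$ because $C_n(\varphi,V)\subseteq V$, so the supremum is genuinely $0$ and not merely $\leq 0$, but you implicitly cover this by observing that the members of $\mathcal{S}$ realize the value $0$.
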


The computation of the topological entropy of continuous endomorphisms is somehow harder than that of continuous automorphisms, but we have  results for totally disconnected  groups.

\begin{corollary}[See \cite{daf}, Lemma 2.3, Theorem 3.11; See \cite{BrunoVirili},  Corollary 1.3]\label{mon} \

Let $G$ be a locally compact group and $\varphi\in\mathrm{End}(G)$. 
\begin{enumerate}
\item[{\rm (a)}.] If $H$ is a $\varphi$-invariant closed subgroup of $G$, then $\mathsf{h}_{\mathsf{top}}({\varphi_|}_H)\leq \mathsf{h}_{\mathsf{top}}(\varphi)$, and, if in addition $H$ is normal, then $\mathsf{h}_{\mathsf{top}}(\bar{\varphi}_{G/H})\leq \mathsf{h}_{\mathsf{top}}(\varphi)$, where $\bar{\varphi}_{G/H}:G/H\to G/H$ is induced by $\varphi$.
\item[{\rm (b)}.] If  $\mathcal{S} \subseteq \mathcal{U}(G)$ is a local basis  of $G$ such that   $\varphi^{-n}(V)$ is normal in $G$ for all $n$ and  $V \in \mathcal{S}$, then   $\mathsf{h}_{\mathsf{top}}(\varphi)=\mathsf{h}_{\mathsf{top}}(\bar \varphi_{G/\ker \varphi})$. 
\item[{\rm (c)}.] If $G$ is totally disconnected and $\varphi\in \mathrm{Aut}(G)$, then  $\mathsf{h}_{\mathsf{top}}(\varphi)=  \mathsf{h}_{\mathsf{top}}(\varphi_{|N}) + \mathsf{h}_{\mathsf{top}}(\bar \varphi_{G/N})$, where $N$ is a closed normal subgroup of $G$.

\end{enumerate}
\end{corollary}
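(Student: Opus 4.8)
The plan is to reduce everything to the index formula \eqref{htoptdlc} (and, outside the totally disconnected range needed for parts (b)--(c), to the Haar-measure definition) and to isolate two purely algebraic inputs about the cotrajectories $C_n(\varphi,V)=\bigcap_{j=0}^{n-1}\varphi^{-j}(V)$. The first is their behaviour under a $\varphi$-invariant closed subgroup $H$: the identity $C_n(\varphi,V)\cap H=C_n(\varphi_{|H},V\cap H)$ (which holds because $\varphi(H)\subseteq H$ forces $\varphi^{-j}(V)\cap H=(\varphi_{|H})^{-j}(V\cap H)$) and the inclusion $\pi(C_n(\varphi,V))\subseteq C_n(\bar\varphi_{G/H},\pi(V))$ for the quotient map $\pi\colon G\to G/H$. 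The second is the Dedekind index identity $[V:C_n(\varphi,V)]=[VH:C_n(\varphi,V)H]\cdot[V\cap H:C_n(\varphi,V)\cap H]$, valid whenever $C_n(\varphi,V)\le V$ and $H$ is normal, together with $[VH:C_n(\varphi,V)H]=[\pi(V):\pi(C_n(\varphi,V))]$.

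For part (a) I would argue directly from these inputs. The injection $(V\cap H)/(C_n(\varphi,V)\cap H)\hookrightarrow V/C_n(\varphi,V)$ combined with the first identity gives $[V\cap H:C_n(\varphi_{|H},V\cap H)]\le[V:C_n(\varphi,V)]$; dividing by $n$, passing to the limit, and taking the supremum over a local basis yields $\mathsf{h}_{\mathsf{top}}(\varphi_{|H})\le\mathsf{h}_{\mathsf{top}}(\varphi)$. For the quotient, the inclusion $\pi(C_n(\varphi,V))\subseteq C_n(\bar\varphi_{G/H},\pi(V))\subseteq \pi(V)$ gives $[\pi(V):C_n(\bar\varphi_{G/H},\pi(V))]\le[\pi(V):\pi(C_n(\varphi,V))]\le[V:C_n(\varphi,V)]$, hence $\mathsf{h}_{\mathsf{top}}(\bar\varphi_{G/H})\le\mathsf{h}_{\mathsf{top}}(\varphi)$. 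Outside the totally disconnected case the same two comparisons are run with the left Haar measures, using the restriction of $\mu$ to $H$ and the quotient integration formula in place of the indices.

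Part (b) splits into two inequalities. Since $\ker\varphi$ is a closed normal $\varphi$-invariant subgroup, part (a) already gives $\mathsf{h}_{\mathsf{top}}(\bar\varphi_{G/\ker\varphi})\le\mathsf{h}_{\mathsf{top}}(\varphi)$. For the reverse inequality I would use the hypothesis that every $\varphi^{-j}(V)$ is normal, so that each $C_n(\varphi,V)$ is a compact open normal subgroup. The point is the equality $C_n(\varphi,V)\cap\ker\varphi=V\cap\ker\varphi$, which holds because $\ker\varphi\subseteq\varphi^{-j}(V)$ for every $j\ge1$ (as $1\in V$); feeding this into the Dedekind identity collapses the second factor to $1$ and yields $[V:C_n(\varphi,V)]=[\pi(V):\pi(C_n(\varphi,V))]$ with $\pi\colon G\to G/\ker\varphi$. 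Writing $[\pi(V):\pi(C_n(\varphi,V))]=[\pi(V):C_n(\bar\varphi,\pi(V))]\cdot[C_n(\bar\varphi,\pi(V)):\pi(C_n(\varphi,V))]$, the remaining task is to show that the defect factor $[C_n(\bar\varphi,\pi(V)):\pi(C_n(\varphi,V))]$ grows subexponentially in $n$; this is where the openness of $V$ and the relation $\varphi^{j}(\ker\varphi)=1$ enter, and it forces $\tfrac1n\log[V:C_n(\varphi,V)]$ and $\tfrac1n\log[\pi(V):C_n(\bar\varphi,\pi(V))]$ to share the same limit, giving $\mathsf{h}_{\mathsf{top}}(\varphi)\le\mathsf{h}_{\mathsf{top}}(\bar\varphi_{G/\ker\varphi})$.

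Finally, for part (c), taking $N$ to be $\varphi$-invariant so that $\bar\varphi_{G/N}$ is defined, I would combine the restriction identity $C_n(\varphi,V)\cap N=C_n(\varphi_{|N},V\cap N)$ with the Dedekind identity to factor, for each compact open $V$, the quantity $\tfrac1n\log[V:C_n(\varphi,V)]$ as the sum of a restriction term $\tfrac1n\log[V\cap N:C_n(\varphi_{|N},V\cap N)]$ and a quotient term $\tfrac1n\log[VN/N:\pi(C_n(\varphi,V))]$. Subadditivity $\mathsf{h}_{\mathsf{top}}(\varphi)\le\mathsf{h}_{\mathsf{top}}(\varphi_{|N})+\mathsf{h}_{\mathsf{top}}(\bar\varphi_{G/N})$ then follows from $\pi(C_n(\varphi,V))\subseteq C_n(\bar\varphi_{G/N},\pi(V))$ and part (a). \emph{The main obstacle is the reverse (superadditive) inequality}: because $\pi(C_n(\varphi,V))$ can be strictly smaller than $C_n(\bar\varphi_{G/N},\pi(V))$, one cannot read off the quotient contribution from an arbitrary $V$. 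The plan is to restrict the supremum to a cofinal family of compact open subgroups that are tidy for $\varphi$ in the sense of Willis, for which the projection commutes with the formation of cotrajectories up to a subexponential error; here the invertibility of $\varphi\in\mathrm{Aut}(G)$ is essential, since it allows one to pass between forward images and preimages and to invoke the scale function. Establishing this exact splitting of the quotient term, that is, the Addition Theorem for automorphisms of totally disconnected locally compact groups, is the genuinely hard step and is precisely the content of the cited results.
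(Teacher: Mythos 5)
First, a remark on the comparison you asked for: the paper contains no proof of this corollary at all. It sits in Section~3 (``Previous Results'') and is imported wholesale from Lemma~2.3 and Theorem~3.11 of \cite{daf} and Corollary~1.3 of \cite{BrunoVirili}, so the only meaningful question is whether your proposal stands on its own. It does not quite: part (a) is essentially complete in the totally disconnected case, but (a) in general, and (b) and (c) entirely, contain unproved steps.

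For (a), the identities $C_n(\varphi_{|H},V\cap H)=C_n(\varphi,V)\cap H$ and $\pi\bigl(C_n(\varphi,V)\bigr)\subseteq C_n(\bar\varphi_{G/H},\pi(V))$, fed into \eqref{htoptdlc}, do give both inequalities when $G$ is totally disconnected. But the statement is for arbitrary locally compact $G$, and your one-line extension ``run the same comparisons with the restriction of $\mu$ to $H$'' fails: when $H$ is closed but not open, the restriction of a Haar measure of $G$ to $H$ is the zero measure and is not a Haar measure of $H$. Relating $\mu_H\bigl(C_n(\varphi,V)\cap H\bigr)$ to $\mu_G\bigl(C_n(\varphi,V)\bigr)$ needs a genuine argument (Weil's quotient integral formula, or a thickening of $C_n(\varphi,V)\cap H$ by a fixed compact neighbourhood), which is exactly what the cited Lemma~2.3 of \cite{daf} supplies.

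For (b) and (c) you locate the difficulties accurately but stop where the proof begins. In (b), after correctly collapsing $[V\cap\ker\varphi:C_n(\varphi,V)\cap\ker\varphi]$ to $1$, the whole content is the comparison of $\pi\bigl(C_n(\varphi,V)\bigr)$ with $C_n(\bar\varphi,\pi(V))$; your claim that the defect ``grows subexponentially'' is asserted, not proved, and it is not an innocuous estimate, because for a non-surjective $\varphi$ one has $\bar\varphi^{-j}(\pi(V))=\pi\bigl(\varphi^{-j}(V\ker\varphi)\bigr)$ and $\varphi^{-j}(V\ker\varphi)$ can be strictly larger than $\varphi^{-j}(V)\ker\varphi$; controlling this is precisely Theorem~3.11 of \cite{daf}. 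In (c) you explicitly defer the superadditive half --- the Addition Theorem for automorphisms of totally disconnected locally compact groups, resting on Willis's tidy subgroups and the scale function --- to ``the cited results''; that half \emph{is} the theorem, and is the main content of \cite{BrunoVirili}. (You are right, incidentally, that $N$ must be taken $\varphi$-stable for $\varphi_{|N}$ and $\bar\varphi_{G/N}$ to be automorphisms; the statement as printed omits this hypothesis.) So as a self-contained proof the proposal closes only part (a) in the totally disconnected case; as a reading of where the real work lies, it is accurate.
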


The third regards $p$-adic rationals. Denoting the $p$-adic norm with $|-|_p$,  Yuzvinski's Formula  \cite{LW, Y} helps with the following computations:

\begin{theorem}[See \cite{LW}]\label{yuzvinskip} 
For $n\in\mathbb{N}$ and $\varphi \in \mathrm{End}
(\mathbb{Q}_p^n)$, we have
\begin{equation}\mathsf{h}_{\mathsf{top}}(\varphi)=\sum_{|\lambda_i|_p>1}\log|\lambda_i|_p,\end{equation}
where $\lambda_i$ (with $1\le i \le n$) is  eigenvalue of $\varphi$ in a finite extension of $\mathbb{Q}_p$. In particular,   $\mathbb{Q}_p^n\in\mathfrak{E}
_{<\infty}$.
\end{theorem}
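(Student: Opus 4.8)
The plan is to work directly with the index formula \eqref{htoptdlc}, using that a continuous endomorphism of $\mathbb{Q}_p^n$ is automatically $\mathbb{Q}_p$-linear, hence a matrix $A\in M_n(\mathbb{Q}_p)$, and that every $V\in\mathcal U(\mathbb{Q}_p^n)$ is a full-rank $\mathbb{Z}_p$-lattice. Any two such lattices are commensurable, so the limit $\lim_n \tfrac1n\log|V:C_n(\varphi,V)|$ is unchanged under passing to a commensurable lattice; this lets me compute with one conveniently adapted lattice and at the same time control the supremum over all $V$. I would first reduce to the automorphism case: the generalized kernel $W=\ker(A^n)$ is $\varphi$-invariant with $\varphi|_W$ nilpotent, and iterating Corollary \ref{mon}(b) (legitimate since $\mathbb{Q}_p^n$ is abelian, so every $\varphi^{-k}(V)$ is normal) gives $\mathsf{h}_{\mathsf{top}}(\varphi)=\mathsf{h}_{\mathsf{top}}(\bar\varphi)$ for the automorphism $\bar\varphi$ induced on $\mathbb{Q}_p^n/W$; the discarded eigenvalue $0$ has $|0|_p\le 1$ and is absent from the asserted sum.

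Next I would decompose according to the slopes of the Newton polygon of the characteristic polynomial $\chi_A$: the polygon factors $\chi_A$ over $\mathbb{Q}_p$ into coprime polynomials whose roots share a common $p$-adic absolute value, and the resulting primary decomposition yields a $\varphi$-invariant splitting $\mathbb{Q}_p^n=\bigoplus_i V_i$ in which all eigenvalues of $\varphi|_{V_i}$ have a fixed absolute value $q_i$, with $\dim V_i=d_i$. By Corollary \ref{mon}(c) (applied repeatedly to the closed summands, the quotients being the complementary summands) the entropy is additive over the $V_i$, so it suffices to evaluate one slope at a time.

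On a piece with $q_i\le 1$ one has $\chi_{A|_{V_i}}\in\mathbb{Z}_p[t]$, so Cayley--Hamilton makes $L_i=\sum_{k=0}^{d_i-1}(A|_{V_i})^k\mathbb{Z}_p^{d_i}$ a $\varphi$-invariant lattice and Corollary \ref{invbase} gives entropy $0$, matching the absence of such eigenvalues from the sum. On a piece with $q_i>1$ the map $\varphi|_{V_i}$ is invertible and $(\varphi|_{V_i})^{-1}$ has all eigenvalues of absolute value $q_i^{-1}<1$; applying the previous case to the inverse produces a lattice $L_i$ with $\varphi^{-1}(L_i)\subseteq L_i$, so the cotrajectories telescope to $C_n(\varphi|_{V_i},L_i)=\varphi^{-(n-1)}(L_i)$ and $|L_i:C_n|=|L_i:\varphi^{-1}(L_i)|^{n-1}$. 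Identifying $|L_i:\varphi^{-1}(L_i)|$ with the module $\mathrm{mod}(\varphi|_{V_i})=|\det(A|_{V_i})|_p=q_i^{d_i}$ yields the rate $d_i\log q_i=\sum_{\lambda\in V_i}\log|\lambda|_p$. Summing over the pieces with $q_i>1$ gives $\sum_{|\lambda_i|_p>1}\log|\lambda_i|_p$, and the finiteness of this sum yields $\mathbb{Q}_p^n\in\mathfrak E_{<\infty}$.

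I expect the main obstacle to be the slope decomposition of the second step: one must produce the invariant subspaces over $\mathbb{Q}_p$ itself and verify that the factors cut out by distinct Newton-polygon slopes are genuinely coprime in $\mathbb{Q}_p[t]$, so that the primary decomposition supplies $\mathbb{Q}_p$-rational $\varphi$-invariant complements. The index bookkeeping in the third step---the telescoping identity together with the identification of $|L_i:\varphi^{-1}(L_i)|$ with the module $|\det|_p$---is the remaining delicate point, but it is routine once the decomposition is available.
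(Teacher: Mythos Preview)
The paper does not supply its own proof of this statement: it is recorded with the citation ``See \cite{LW}'' and simply invoked as a known result, so there is no argument in the paper to compare against.

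Your outline is a correct self-contained proof. The reduction to the automorphism case by iterating Corollary~\ref{mon}(b) on the generalised kernel, the additivity over the Newton-polygon slope pieces via Corollary~\ref{mon}(c), the construction of a $\varphi$-invariant lattice on a slope with $q_i\le 1$ using Cayley--Hamilton together with Corollary~\ref{invbase}, and the telescoping $C_n(\varphi,L_i)=\varphi^{-(n-1)}(L_i)$ with $|L_i:\varphi^{-1}(L_i)|=|\det(\varphi|_{V_i})|_p$ on a slope with $q_i>1$ all go through. The commensurability argument for the supremum over $\mathcal U(\mathbb{Q}_p^n)$ is valid once you are in the automorphism case, since every compact open subgroup of $\mathbb{Q}_p^n$ is a full $\mathbb{Z}_p$-lattice and scaling $L\mapsto p^aL$ commutes with forming cotrajectories. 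The one point worth making explicit is that the slope factorisation of the Newton polygon produces a factorisation of $\chi_A$ in $\mathbb{Q}_p[t]$ itself, with factors corresponding to distinct slopes sharing no roots in $\overline{\mathbb{Q}_p}$ and hence coprime; this is classical, and is precisely the ``main obstacle'' you anticipated.
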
  


Further criteria of finiteness are related to the notion of of finite $p$-rank.

\begin{theorem}[See \cite{HHR}, Theorem 3.97]\label{p-rank-finite} 
A locally compact abelian $p$-group $G$ has finite $p$-rank if and only $G \simeq \mathbb{Z}_p^\alpha \times\mathbb{Q}_p^\beta \times \mathbb{Z}(p^\infty)^\gamma \times E_p $ for some nonnegative integers $\alpha, \beta, \gamma, \delta$ and a finite $p$-group $E_p$ of $\mathrm{rank}_p(E_p)=\delta$. In particular, $G$ belongs to $\mathfrak{E}_{<\infty}$ and $\mathrm{rank}_p(G)=\alpha + \beta + \gamma + \delta.$  The case  of $G$ in $\mathfrak{E}_0$  is characterized by the condition $\beta=0$.
\end{theorem}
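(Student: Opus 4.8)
The plan is to split the statement into its structural part (the displayed isomorphism together with the rank formula) and its dynamical part (membership in $\mathfrak E_{<\infty}$ and the characterisation of $\mathfrak E_0$). For the structural part I would argue as follows. Since $G$ is a locally compact abelian $p$-group, its identity component is trivial --- a nontrivial connected locally compact abelian group cannot be a $p$-group --- so $G$ is totally disconnected and van Dantzig's theorem \cite{vD} provides a compact open subgroup $U$. The subgroup $U$ is a profinite abelian $p$-group of finite $p$-rank, so by Pontryagin duality and Lemma \ref{Hofmann_Cor8.5} its dual $\widehat U$ is a discrete torsion abelian $p$-group of finite rank; the classification of finite-rank torsion abelian $p$-groups gives $\widehat U\simeq \mathbb Z(p^\infty)^{\gamma}\oplus F_0$ with $F_0$ finite, whence $U\simeq \mathbb Z_p^{\gamma}\times \widehat{F_0}$. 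The quotient $G/U$ is a discrete abelian $p$-group of finite rank, hence of the form $\mathbb Z(p^\infty)^{c}\oplus F_1$. It then remains to analyse the extension $1\to U\to G\to G/U\to 1$: each divisible summand $\mathbb Z(p^\infty)$ of $G/U$ either splits off as a discrete factor of $G$ or combines with a $\mathbb Z_p$ inside $U$ to produce a copy of $\mathbb Q_p$ (using $\mathbb Q_p/\mathbb Z_p\simeq \mathbb Z(p^\infty)$ and the uniqueness of the nonsplit locally compact extension assembling a compact $\mathbb Z_p$ with a divisible quotient). This reassembly, which is the technical core and which I would take from \cite[Theorem~3.97]{HHR}, yields $G\simeq \mathbb Z_p^{\alpha}\times \mathbb Q_p^{\beta}\times \mathbb Z(p^\infty)^{\gamma}\times E_p$. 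The converse implication is immediate, since each of the four factors has finite $p$-rank and finite $p$-rank is inherited by finite direct products. For the rank formula I would then compute $\mathrm{rank}_p$ on each factor --- it equals $1$ for each of $\mathbb Z_p$, $\mathbb Q_p$ and $\mathbb Z(p^\infty)$, and $\delta=\mathrm{rank}_p(E_p)$ for the finite factor --- and use additivity of $\mathrm{rank}_p$ on the displayed product to obtain $\mathrm{rank}_p(G)=\alpha+\beta+\gamma+\delta$, these local computations being exactly those recorded in \cite{HHR}.

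For the dynamical part I would first produce a \emph{fully invariant} closed filtration of $G$ whose successive quotients are powers of the four building blocks: the torsion subgroup $t(G)=\mathbb Z(p^\infty)^{\gamma}\times E_p$ is fully invariant, its divisible part $\mathbb Z(p^\infty)^{\gamma}$ is fully invariant, and the maximal divisible subgroup of the torsion-free quotient $G/t(G)\simeq \mathbb Z_p^{\alpha}\times \mathbb Q_p^{\beta}$ is $\mathbb Q_p^{\beta}$, again fully invariant. Since every $\varphi\in\mathrm{End}(G)$ preserves fully invariant subgroups, the Addition Theorem for totally disconnected locally compact groups \cite[Corollary~1.3]{BrunoVirili} (see Corollary \ref{mon}) applies along this filtration and exhibits $\mathsf h_{\mathsf{top}}(\varphi)$ as the sum of the entropies of the induced maps on the factors. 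Each factor contributes a finite value: $\mathbb Z_p^{\alpha}\in\mathfrak E_0$ by Corollary \ref{invbase}, the discrete factors $\mathbb Z(p^\infty)^{\gamma}$ and $E_p$ lie in $\mathfrak E_0$ by Remark \ref{discrete=0}, and $\mathbb Q_p^{\beta}$ has only finite entropies by Yuzvinski's formula (Theorem \ref{yuzvinskip}). Hence every continuous endomorphism of $G$ has finite topological entropy, i.e. $G\in\mathfrak E_{<\infty}$.

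Finally, the same additive decomposition pins down $\mathfrak E_0$. If $\beta=0$ then every factor of the filtration lies in $\mathfrak E_0$, so the Addition Theorem forces $\mathsf h_{\mathsf{top}}(\varphi)=0$ for all $\varphi$ and $G\in\mathfrak E_0$. If $\beta\ge 1$, then taking $\varphi$ to be multiplication by $p$ on one $\mathbb Q_p$ coordinate and the identity elsewhere gives, via Theorem \ref{yuzvinskip} and monotonicity (Corollary \ref{mon}(a)), $\mathsf h_{\mathsf{top}}(\varphi)\ge \log p>0$, so $G\notin\mathfrak E_0$; thus $G\in\mathfrak E_0$ if and only if $\beta=0$. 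I expect the main obstacle to be the structural reassembly step --- separating, inside the extension $1\to U\to G\to G/U\to 1$, the divisible quotients that split off as discrete Pr\"ufer factors from those that glue to a $\mathbb Z_p$ to form $\mathbb Q_p$ --- together with verifying that the filtration used in the dynamical part is genuinely fully invariant, so that the Addition Theorem may be invoked for arbitrary continuous endomorphisms and not merely for automorphisms.
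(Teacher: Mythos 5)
First, note that the paper itself contains no proof of Theorem \ref{p-rank-finite}: the classification is imported verbatim from \cite[Theorem 3.97]{HHR}, and the entropy assertions come from \cite{daf}. So your attempt must stand entirely on its own, and it has one decisive gap, in the dynamical part. Corollary \ref{mon}(c) (equivalently \cite[Corollary 1.3]{BrunoVirili}) is an Addition Theorem for topological \emph{automorphisms} of totally disconnected locally compact groups, and you invoke it along your filtration for an arbitrary $\varphi\in\mathrm{End}(G)$. No endomorphism version of that result is available: the validity of $\mathsf{h}_{\mathsf{top}}(\varphi)=\mathsf{h}_{\mathsf{top}}(\varphi_{|N})+\mathsf{h}_{\mathsf{top}}(\bar\varphi_{G/N})$ for endomorphisms is precisely the Addition Theorem problem which the paper treats as a hypothesis (see \eqref{AT} and Theorem \ref{LAAAAAst:thm}), and full invariance of the filtration does not repair this --- it guarantees only that the restricted and induced maps exist, not that their entropies add. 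Your closing remark therefore misdiagnoses the obstacle: it is not whether the filtration is fully invariant (it is; your three subgroups are correct), but that Corollary \ref{mon}(c) simply does not apply to endomorphisms. Nor can you rescue the argument the way the paper rescues the Heisenberg case in the proof of Theorem \ref{Heisenbergpgroups}, where an injective endomorphism is upgraded to an automorphism via the open mapping theorem and a non-injective one is handled through Corollary \ref{mon}(b): here multiplication by $p$ on a $\mathbb{Z}_p$ factor is an injective continuous endomorphism of $G$ that is not surjective, so injectivity does not give $\varphi\in\mathrm{Aut}(G)$, and even after using Corollary \ref{mon}(b) (normality is automatic in the abelian case) you are left with an injective, generally non-invertible, induced map. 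With the paper's toolkit, endomorphisms admit only lower bounds (Corollary \ref{mon}(a)) and the product additivity of Lemma \ref{ATT_for_CGLCA_Groups}, which does not apply because an endomorphism preserving your filtration need not split as a product map. Hence both the claim $G\in\mathfrak{E}_{<\infty}$ and the implication that $\beta=0$ forces $G\in\mathfrak{E}_0$ remain unproved in your write-up; these upper bounds are exactly what \cite{daf} establishes by finer arguments.

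Two smaller points. Your structural half is circular exactly at its core: the reassembly of the extension $1\to U\to G\to G/U\to 1$ into $\mathbb{Z}_p^\alpha\times\mathbb{Q}_p^\beta\times\mathbb{Z}(p^\infty)^\gamma\times E_p$ is, in your own words, taken from \cite[Theorem 3.97]{HHR} --- that is, from the very statement being proved; as a blind proof this step is missing, although deferring to the citation is, in fairness, what the paper itself does. Finally, there is a sign slip: by Theorem \ref{yuzvinskip}, multiplication by $p$ on $\mathbb{Q}_p$ has entropy $0$, since its eigenvalue satisfies $|p|_p=p^{-1}<1$; to witness $G\notin\mathfrak{E}_0$ when $\beta\geq 1$ you must instead use multiplication by $p^{-1}$, which has entropy $\log p$. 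With that correction, this direction of the $\mathfrak{E}_0$ characterization does follow correctly, since it needs only monotonicity, Corollary \ref{mon}(a), which is valid for endomorphisms.
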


The above result  shows that the $p$-rank is preserved under Pontryagin duality. Indeed, \begin{equation}\widehat{G} ={(\mathbb{Z}_p^\alpha \times\mathbb{Q}_p^\beta \times \mathbb{Z}(p^\infty)^\gamma \times F_p)}^{\wedge} \cong \mathbb{Z}_p^\gamma \times\mathbb{Q}_p^\beta \times \mathbb{Z}(p^\infty)^\alpha \times F_p,
\end{equation} and so $\mathrm{rank}_p(\widehat{G}) = \mathrm{rank}_p(G)$. In particular, it can be seen   that $\mathbb{Q}_p^\beta \in \mathfrak{E}_{<\infty}\setminus \mathfrak{E}_0$,  $\mathbb{R}^d \in \mathfrak{E}_{<\infty} \setminus \mathfrak{E}_0$, $\mathbb{Z}^\gamma_p \in \mathfrak{E}_0$, $F_p \in \mathfrak{E}_0$ and ${\mathbb{Z}(p^\infty)}^\alpha \in \mathfrak{E}_0$.
Further results are reported below in the abelian case.

\begin{theorem}[See \cite{daf}, Theorems 1.1 and 1.2]\label{origin}
Let $G$ be a locally compact abelian group. 
\begin{itemize}
\item[{\rm (i)}.] If $G$ belongs to $\mathfrak{E}_{<\infty}$, then its dimension should be finite;  
\item[{\rm (ii)}.]The viceversa of (i) above is true when $G$ is compact and $G/G_0$ belongs to $\mathfrak{E}_{<\infty}$;
\item[{\rm (iii)}.] If $G$ belongs to $\mathfrak{E}_0$, then $G$ is totally disconnected; moreover  a profinite group belongs to $ \mathfrak{E}_0$ if and only if it belongs to $ \mathfrak{E}_{<\infty}$;
  \item[{\rm (iv)}.] If $G$ is periodic,  then $G$ is in $ \mathfrak E_0$ if and only if all its $p$-Sylow subgroups $G_p$ do the same. 
\end{itemize}
\end{theorem}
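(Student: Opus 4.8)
The plan is to separate the connected from the totally disconnected behaviour and to exploit that the multiplication-by-$2$ map $[2]\colon x\mapsto 2x$ is a continuous endomorphism of any abelian group commuting with every continuous homomorphism, so that every quotient or subgroup map is automatically $[2]$-equivariant; the four items are logically independent once this tool and the structure theory are in place. For (i) I argue the contrapositive: if $\dim G=+\infty$, then some endomorphism has infinite entropy. Writing the identity component as $G_0\cong\mathbb R^n\times K$ with $K$ compact connected and $n$ finite, the covering dimension satisfies $\dim G=\dim G_0=n+\dim K$ with $\dim K=\dim_{\mathbb Q}(\mathbb Q\otimes\widehat K)$, so $\dim G=+\infty$ forces $\widehat K$ to be torsion-free of infinite rank. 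For each $k$ I pick $\mathbb Z$-independent characters $\chi_1,\dots,\chi_k\in\widehat K$; by exactness of $\mathrm{Hom}(-,\mathbb T)$ the map $x\mapsto(\chi_1(x),\dots,\chi_k(x))$ is a continuous surjection $K\twoheadrightarrow\mathbb T^k$ intertwining $[2]$ on $K$ with $[2]$ on $\mathbb T^k$, and its kernel is $[2]$-invariant. Since $\mathsf h_{\mathsf{top}}([2]\text{ on }\mathbb T^k)=k\log 2$ by Yuzvinski's formula (see \cite{P,Y}), the monotonicity of Corollary \ref{mon}(a) under $[2]$-invariant quotients and under restriction to the $[2]$-invariant closed subgroup $K$ gives $\mathsf h_{\mathsf{top}}([2])\ge k\log 2$ on $G$ for every $k$, hence $\mathsf h_{\mathsf{top}}([2])=+\infty$ and $G\notin\mathfrak E_{<\infty}$.

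Item (ii) and the first half of (iii) use the same circle of ideas. For (ii), $G$ compact with $\dim G<\infty$ makes $G_0$ a finite-dimensional compact connected group, so Yuzvinski's formula bounds $\mathsf h_{\mathsf{top}}(\varphi_{|G_0})<\infty$ for every $\varphi\in\mathrm{End}(G)$, while $\mathsf h_{\mathsf{top}}(\bar\varphi_{G/G_0})<\infty$ holds by the hypothesis $G/G_0\in\mathfrak E_{<\infty}$; the Addition Theorem for topological entropy of compact abelian groups (in the form developed in \cite{daf}) along the $\varphi$-invariant sequence $G_0\hookrightarrow G\twoheadrightarrow G/G_0$ then yields $\mathsf h_{\mathsf{top}}(\varphi)<\infty$, i.e. $G\in\mathfrak E_{<\infty}$. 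For the first half of (iii) I again argue by contraposition: if $G_0\ne 1$, then $G_0$ is a nontrivial connected LCA group, hence admits a $[2]$-equivariant quotient onto $\mathbb T$ (through a nonzero character of $K$ when $K\ne1$) or onto $\mathbb R$ (when the vector part is nonzero); as $\mathsf h_{\mathsf{top}}([2]\text{ on }\mathbb T)=\mathsf h_{\mathsf{top}}([2]\text{ on }\mathbb R)=\log2$, Corollary \ref{mon}(a) forces $\mathsf h_{\mathsf{top}}([2])\ge\log2>0$ on $G$, contradicting $G\in\mathfrak E_0$. Thus $G_0=1$ and $G$ is totally disconnected.

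For (iv) I use that a continuous homomorphism sends $p$-elements to $p$-elements, so each primary component $G_p$ is characteristic and a periodic LCA group is the local direct product of its $G_p$ relative to compact open subgroups. Taking a compact open $V=\prod_pV_p$ with $V_p=G_p$ for almost all $p$, formula \eqref{htoptdlc} and $\varphi=\prod_p\varphi_{|G_p}$ give $|V:C_n(\varphi,V)|=\prod_p|V_p:C_n(\varphi_{|G_p},V_p)|$, a finite product, whence $\mathsf h_{\mathsf{top}}(\varphi)=\sum_p\mathsf h_{\mathsf{top}}(\varphi_{|G_p})$. If every $G_p\in\mathfrak E_0$, each summand vanishes and $G\in\mathfrak E_0$; conversely any $\psi\in\mathrm{End}(G_p)$ extends by the identity on the other components to some $\varphi$ with $\varphi_{|G_p}=\psi$, so $G\in\mathfrak E_0$ together with Corollary \ref{mon}(a) forces $\mathsf h_{\mathsf{top}}(\psi)=0$, giving the equivalence.

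Finally, $\mathfrak E_0\subseteq\mathfrak E_{<\infty}$ is trivial, so the profinite half of (iii) reduces to its converse: if a profinite $G$ carries an endomorphism of positive entropy, then by the primary decomposition and Theorem \ref{p-rank-finite} together with Corollary \ref{invbase} (finite-rank profinite $p$-groups $\mathbb Z_p^\alpha\times E_p$ lie in $\mathfrak E_0$) some $G_p$ must have infinite $p$-rank. The main obstacle of the whole theorem is to manufacture, from infinite $p$-rank, a single endomorphism of infinite entropy: dualizing $G_p$ to a discrete $p$-group of infinite rank, I would split off a countable direct summand on which a direct sum of Bernoulli shifts (or a single shift with infinite fibre) has infinite algebraic entropy, and transport this back through the Bridge Theorem. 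Making such a summand split and the shift a genuine endomorphism of the whole dual is the delicate point, which I expect to carry out via the algebraic-entropy dichotomy of \cite{daf}.
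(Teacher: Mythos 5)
First, a point of comparison: the paper does not prove this theorem at all --- it is imported verbatim from \cite{daf} (Theorems 1.1 and 1.2) as background in Section 3, so the paper's ``proof'' is the citation itself. Measured against that, most of your reconstruction is sound. The multiplication-by-$2$ device in (i) and in the first half of (iii) works: $[2]$ is a continuous endomorphism commuting with every homomorphism, the $\mathbb{Z}$-independent characters give a surjection $K\twoheadrightarrow\mathbb{T}^k$ with $[2]$-invariant kernel, and Corollary \ref{mon}(a) then forces $\mathsf{h}_{\mathsf{top}}([2])\ge k\log 2$, respectively $\ge\log 2$. Your argument for (iv) is essentially a re-derivation of Theorem \ref{LAAAAAst:thm}, which you could simply have cited; the one point you handle correctly that a citation would not cover is the extension of $\psi\in\mathrm{End}(G_p)$ by the identity on the other primary components. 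For (ii), note that the subadditivity $\mathsf{h}_{\mathsf{top}}(\varphi)\le\mathsf{h}_{\mathsf{top}}(\varphi_{|G_0})+\mathsf{h}_{\mathsf{top}}(\bar\varphi_{G/G_0})$ for \emph{endomorphisms} of compact groups is not among the tools quoted in this paper (Corollary \ref{mon}(a) gives only the opposite inequalities, (b) and (c) do not apply to connected compact groups), so it must be cited explicitly from \cite{DS}; also, Yuzvinski's formula as stated concerns $\widehat{\mathbb{Q}}^n$, and finiteness of entropy on an arbitrary finite-dimensional compact connected $G_0$ needs the standard lifting of $\widehat{\varphi}$ to the divisible hull $\mathbb{Q}\otimes\widehat{G_0}\cong\mathbb{Q}^n$ before the formula applies. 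These are repairable citation/gloss issues, not conceptual errors.

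The genuine gap is the second half of (iii), the profinite dichotomy $\mathfrak{E}_{<\infty}\Rightarrow\mathfrak{E}_0$, and you concede it yourself (``the delicate point, which I expect to carry out\ldots''). Your reduction to ``some $G_p$ has infinite $p$-rank'' is fine, but producing from infinite $p$-rank a single continuous endomorphism of $G_p$ with \emph{infinite} entropy is precisely the mathematical content of that clause, and the sketch does not achieve it. Two concrete obstructions. First, entropy is monotone the wrong way under quotients: the easy surjection $G_p\twoheadrightarrow G_p/\mathrm{Frat}(G_p)\cong\mathbb{Z}(p)^\kappa$ and a shift downstairs give no lower bound upstairs --- you would have to lift the endomorphism, which is in general impossible. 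Second, on the dual side, a discrete abelian $p$-group of infinite rank need not split off a countable direct sum of cyclic groups as a direct summand: basic subgroups are pure and dense but generally not summands (torsion-complete $p$-groups are the standard counterexample), so ``extend the Bernoulli-type shift by the identity'' is not available in general. Finally, appealing to ``the algebraic-entropy dichotomy of \cite{daf}'' is circular in this exercise, since that dichotomy is, in essence, the very statement being proved. As written, your proposal establishes (i), (ii) modulo the compact Addition Theorem, the first claim of (iii), and (iv), but not the profinite equivalence in (iii).
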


In the arguments which are used to prove Theorem \ref{origin}, the main logic is to find decompositions of the endomorphisms in portions where we can control the finiteness of the topological entropy. 
In fact we say that \emph{the Addition Theorem holds} for  $(G,\varphi,H)$ of a locally compact group $G$ with $\varphi\in\mathrm{End}(G)$ and a $\varphi$-invariant closed normal subgroup $N$ of $G$, if  
\begin{equation}\label{AT}
\mathsf{h}_{\mathsf{top}}(\varphi) =\mathsf{h}_{\mathsf{top}}({\varphi_|}_N)+\mathsf{h}_{\mathsf{top}}(\bar \varphi_{G/N}),
\end{equation}
or  briefly, we write that $AT(G,\varphi,N)$ \emph{holds}. Of course, \eqref{AT} is equivalent to the commutativity of the following diagram
\begin{equation}\label{classicalsituation}
\begin{CD}
0 @>>>N@>\iota>>G@>\pi>>G/N@>>>0\\
@. @V{\varphi_|}_NVV @V\varphi VV @V \bar \varphi_{G/N} VV\\
0 @>>>N@>\iota>>G@>\pi>>G/N@>>>0\\
\end{CD}
\end{equation}
Similarly,  \emph{$AT(G)$ holds} if $AT(G,\varphi,N)$, which is depending on $\varphi$ and $N$ in general,  is satisfied by all $\varphi$ and  $N$. From \cite[Proposition 3.6]{daf}, if $N$ is a fully invariant open subgroup of $G$ and $AT(N)$ holds, then also $AT(G)$ holds.

At this point it is important that we pause and look closely at the structure of compactly generated locally compact abelian groups of Theorem \ref{CompactlyGenerated}. First, we note that the groups that appear in the decomposition are either compact or totally disconnected, or isomorphic to $\mathbb{R}^d$ for some nonnegative integer $d$. Because of this observation, we record the following result:

\begin{lemma}[See \cite{daf}, Lemma 3.1]\label{ATT_for_CGLCA_Groups}
    Let $A$, $B$ be two locally compact groups that either are compact, or totally disconnected or isomorphic to $\mathbb{R}^d$ for some nonnegative integer $d$, and $f \in\mathrm{End}(A),$ $g \in\mathrm{End}(B)$. Consider $A \times B$ with the product topology and $f \times g \in\mathrm{End}(A \times B)$. Then \begin{equation}
        \mathsf{h}_{\mathsf{top}}(f \times g)=\mathsf{h}_{\mathsf{top}}(f)+\mathsf{h}_{\mathsf{top}}(g).
    \end{equation} 
\end{lemma}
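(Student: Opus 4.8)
The plan is to reduce the whole computation to \emph{product} compact neighborhoods and then exploit the multiplicativity of the product Haar measure. Write $\mu_A$, $\mu_B$ for left Haar measures on $A$, $B$, so that $\mu=\mu_A\times\mu_B$ is a left Haar measure on $A\times B$. First I would observe that the boxes $V_A\times V_B$, with $V_A$ and $V_B$ ranging over local bases of compact neighborhoods of the two identities (using compact open subgroups from $\mathcal{U}(A)$, resp.\ $\mathcal{U}(B)$, on a totally disconnected factor), form a cofinal (downward) family inside $\mathcal{CT}(A\times B)$, because the product topology has the boxes as a local basis at the identity. Since $V_A\times V_B\subseteq W$ forces $C_n(f\times g,V_A\times V_B)\subseteq C_n(f\times g,W)$ and hence $-\log\mu(C_n(f\times g,V_A\times V_B))\ge-\log\mu(C_n(f\times g,W))$ termwise, the $\limsup$ functional is monotone and the supremum defining $\mathsf{h}_{\mathsf{top}}(f\times g)$ is attained along the boxes. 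The second, purely formal, ingredient is the identity
\[
C_n(f\times g,V_A\times V_B)=C_n(f,V_A)\times C_n(g,V_B),
\]
which follows from $(f\times g)^{-i}(V_A\times V_B)=f^{-i}(V_A)\times g^{-i}(V_B)$ and the fact that intersections of boxes are boxes. Applying $\mu=\mu_A\times\mu_B$ and then $\log$ converts the $n$-th term into the \emph{sum}
\[
\frac{-\log\mu(C_n(f\times g,V_A\times V_B))}{n}=\frac{-\log\mu_A(C_n(f,V_A))}{n}+\frac{-\log\mu_B(C_n(g,V_B))}{n}.
\]

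The upper bound $\mathsf{h}_{\mathsf{top}}(f\times g)\le\mathsf{h}_{\mathsf{top}}(f)+\mathsf{h}_{\mathsf{top}}(g)$ is then immediate and needs none of the structural hypotheses: for fixed $V_A,V_B$ one uses $\limsup_n(a_n+b_n)\le\limsup_n a_n+\limsup_n b_n$ to bound the box-term by $\mathsf{h}_{\mathsf{top}}(f)+\mathsf{h}_{\mathsf{top}}(g)$, and then takes the supremum over boxes.

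The reverse inequality is where the hypothesis on $A$ and $B$ is essential, and I expect it to be the main obstacle. Since $\limsup(a_n+b_n)\ge\limsup a_n+\limsup b_n$ fails in general, I would upgrade the relevant $\limsup$ to a genuine $\lim$. For each admissible type this holds: when the factor is totally disconnected the cotrajectories may be taken in $\mathcal{U}$ and formula \eqref{htoptdlc} already records the sequence as \emph{convergent} (the underlying reason being the Fekete subadditivity of $n\mapsto\log|V:C_n(\varphi,V)|$ coming from $C_{n+m}=C_n\cap\varphi^{-n}(C_m)$); when the factor is $\mathbb{R}^d$ the limit exists by the classical Bowen--Yuzvinski computation of the entropy of a linear endomorphism through its eigenvalues, cf.\ \cite{B,Walters}; and for a compact factor convergence is likewise known. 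Granting this, for fixed boxes both $a_n=\tfrac{-\log\mu_A(C_n(f,V_A))}{n}$ and $b_n=\tfrac{-\log\mu_B(C_n(g,V_B))}{n}$ converge, so $\lim_n(a_n+b_n)=\lim_n a_n+\lim_n b_n$, and the supremum over the product family splits because the two summands depend on $V_A$ and $V_B$ separately:
\[
\sup_{V_A,V_B}\big(\lim_n a_n+\lim_n b_n\big)=\sup_{V_A}\lim_n a_n+\sup_{V_B}\lim_n b_n=\mathsf{h}_{\mathsf{top}}(f)+\mathsf{h}_{\mathsf{top}}(g),
\]
the last equality using once more that $\limsup=\lim$ on each factor. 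Combining the two inequalities yields the claimed additivity.

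The delicate points I would watch are: (i) making the cofinality/monotonicity reduction clean when $A\times B$ is of \emph{mixed} type (for example $\mathbb{R}^d$ times a totally disconnected group), where no single entropy formula applies to the product but the box argument still works; and (ii) verifying the convergence (not merely the $\limsup$) of the defining sequence for a general compact factor, which is precisely why the hypothesis singles out the compact, totally disconnected, and $\mathbb{R}^d$ classes. No finiteness issue should arise, since should either entropy equal $+\infty$ both the upper- and lower-bound computations remain valid in $[0,+\infty]$.
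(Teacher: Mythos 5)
The paper does not actually prove this statement: it is imported verbatim as Lemma 3.1 of \cite{daf}, so there is no in-paper argument to compare yours against. That said, your proof follows exactly the route that the cited source takes: reduce to the cofinal family of boxes $V_A\times V_B$ inside $\mathcal{CT}(A\times B)$, observe that $C_n(f\times g,V_A\times V_B)=C_n(f,V_A)\times C_n(g,V_B)$ and that the product Haar measure multiplies, obtain the inequality $\le$ from subadditivity of $\limsup$ (with no structural hypotheses), and recover $\ge$ by upgrading the $\limsup$ to a genuine limit on each factor. All of these steps are correct as written, including the separation of the supremum over $V_A$ and $V_B$ and the remark that infinite values cause no trouble.

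The one place where you assert more than you justify is the convergence of $n\mapsto -\tfrac{1}{n}\log\mu_B\bigl(C_n(g,V_B)\bigr)$ when the factor is \emph{compact}. Unlike the totally disconnected case, where \eqref{htoptdlc} records a genuine limit (Fekete applied to the subgroup indices $|V:C_n(\varphi,V)|$), and unlike $\mathbb{R}^d$, where Bowen's eigenvalue computation gives the limit for balls, there is no subadditivity available for Haar measures of cotrajectories in a general compact group, so ``convergence is likewise known'' is not something you can point to. Two observations close the gap. First, you only need \emph{one} of the two factors to have the convergence property, because $\limsup_n(a_n+b_n)=\lim_n a_n+\limsup_n b_n$ whenever $(a_n)$ converges; since $a_n,b_n\ge -\tfrac{1}{n}\log\mu(V)\to 0$, this identity also behaves correctly when values are infinite, and it settles every mixed case (compact times totally disconnected, compact times $\mathbb{R}^d$) using convergence only on the non-compact factor. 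Second, the remaining case, compact times compact, is the classical product formula for topological entropy of compact spaces ($\le$ from \cite{AdlerKonheimMcAndrew}, $\ge$ by the open-cover/Fekete argument as in \cite{Walters}), combined with the agreement of Hood's entropy with the cover entropy on compact groups. With that repair your argument is complete and is, as far as one can reconstruct it, the same as the source's.
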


Again the situation is very clear computationally for locally compact abelian groups.

\begin{theorem}[See \cite{daf}, Theorems 1.8 and 1.9]\label{LAAAAAst:thm}
Let $G$ be a totally disconnected locally compact abelian group. Then, for every $\varphi\in\mathrm{End}(G)$, we have
\begin{equation}\mathsf{h}_{\mathsf{top}}(\varphi)=\sum_{p\in \mathbb P} \mathsf{h}_{\mathsf{top}}({\varphi_|}_{G_{p}}).\end{equation}
If $G$ is also periodic, then $AT(G)$ holds if and only if $AT(G_p)$ holds  for all $p$-Sylow subgroups $G_p$.
\end{theorem}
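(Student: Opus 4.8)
The plan is to reduce everything to the primary decomposition of the compact open subgroups furnished by van Dantzig's theorem together with the index formula \eqref{htoptdlc}. Fix $\varphi\in\mathrm{End}(G)$. Because $G$ is totally disconnected, each primary component $G_p$ is a closed subgroup (by \cite[Lemma 2.6]{HHR}) and is fully invariant, since $\varphi(g)^{p^k}=\varphi(g^{p^k})\to 1$ whenever $g^{p^k}\to 1$; hence $\varphi$ restricts to $\varphi|_{G_p}\in\mathrm{End}(G_p)$. Any $V\in\mathcal U(G)$ is a profinite abelian group, so it splits canonically as $V=\prod_{p\in\mathbb P}V_p$ into its pro-$p$ components, and one checks $V_p=V\cap G_p$. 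The first step is to record, for all $n$,
\begin{equation*}
C_n(\varphi,V)=\prod_{p\in\mathbb P}C_n(\varphi|_{G_p},V_p),\qquad C_n(\varphi|_{G_p},V_p)=C_n(\varphi,V)\cap G_p,
\end{equation*}
which holds because $\varphi$ preserves the primary decomposition and $C_n(\varphi,V)$ is again profinite; passing to indices gives $\log|V:C_n(\varphi,V)|=\sum_{p}\log|V_p:C_n(\varphi|_{G_p},V_p)|$.

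The crux --- and the step I expect to be the main obstacle --- is to interchange $\lim_{n\to\infty}\tfrac1n(\cdot)$ with the a priori infinite sum over $\mathbb P$. I would resolve this by showing the sum is genuinely finite and independent of $n$. Writing $\lambda(\varphi,V)=\lim_{n\to\infty}\frac{\log|V:C_n(\varphi,V)|}{n}$, note that if $V_p$ is $\varphi$-invariant then $C_n(\varphi|_{G_p},V_p)=V_p$ for all $n$, so the $p$-th summand vanishes identically; and the exceptional set $F=\{p\in\mathbb P\mid\varphi(V_p)\not\subseteq V_p\}$ is finite. Indeed $W=V+\varphi(V)$ is a compact open subgroup with $W_p=V_p+\varphi(V_p)$, and $|W:V|=\prod_p|W_p:V_p|<\infty$ forces $W_p=V_p$, i.e. $\varphi(V_p)\subseteq V_p$, for almost all $p$. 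Consequently $\lambda(\varphi,V)=\sum_{p\in\mathbb P}\lambda(\varphi|_{G_p},V_p)$ is a finite sum, so the interchange is immediate. Taking suprema over $V\in\mathcal U(G)$ and bounding each summand by $\mathsf{h}_{\mathsf{top}}(\varphi|_{G_p})$ yields $\mathsf{h}_{\mathsf{top}}(\varphi)\le\sum_p\mathsf{h}_{\mathsf{top}}(\varphi|_{G_p})$. For the reverse inequality, given a finite set $S\subseteq\mathbb P$ and compact open $W_p\le G_p$ for $p\in S$, I would enlarge a fixed $U_0\in\mathcal U(G)$ to $U_0'=\langle U_0,\{W_p\}_{p\in S}\rangle\in\mathcal U(G)$ and carve out the compact open subgroup $V=\prod_pV_p\le U_0'$ with $V_p=W_p$ for $p\in S$ and $V_p=(U_0)_p$ otherwise; the additivity above gives $\lambda(\varphi,V)\ge\sum_{p\in S}\lambda(\varphi|_{G_p},W_p)$. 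Letting $S$ exhaust $\mathbb P$ and the $W_p$ approach the local suprema gives $\mathsf{h}_{\mathsf{top}}(\varphi)\ge\sum_p\mathsf{h}_{\mathsf{top}}(\varphi|_{G_p})$, proving the displayed formula.

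For the second assertion I would use that a periodic locally compact abelian group is the local direct product of its primary components $G_p$ relative to compact open subgroups, by the structure theory in \cite{HHR, hofmor}; in particular every $\varphi$-invariant closed subgroup $N$ satisfies $N_p=N\cap G_p$ and $(G/N)_p\cong G_p/N_p$. If $AT(G_p)$ holds for every $p$, then for any $\varphi$ and any such $N$ I would apply the first formula to $\varphi$, to $\varphi|_N$ and to $\bar\varphi_{G/N}$ and sum the identities $\mathsf{h}_{\mathsf{top}}(\varphi|_{G_p})=\mathsf{h}_{\mathsf{top}}(\varphi|_{N_p})+\mathsf{h}_{\mathsf{top}}(\bar\varphi_{G_p/N_p})$ over $p$, obtaining $AT(G,\varphi,N)$. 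Conversely, assuming $AT(G)$ and given $\psi\in\mathrm{End}(G_p)$ with a $\psi$-invariant closed $M\le G_p$, I would extend $\psi$ to $\varphi\in\mathrm{End}(G)$ acting as $\psi$ on $G_p$ and as the identity on each $G_q$ with $q\ne p$, and set $N=\{(g_q)\mid g_p\in M\}$. Since $\mathsf{h}_{\mathsf{top}}(\mathrm{id}_{G_q})=0$ and, by the first formula, the components with $q\ne p$ contribute nothing, the relation $AT(G,\varphi,N)$ collapses to $\mathsf{h}_{\mathsf{top}}(\psi)=\mathsf{h}_{\mathsf{top}}(\psi|_M)+\mathsf{h}_{\mathsf{top}}(\bar\psi_{G_p/M})$, that is $AT(G_p,\psi,M)$. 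The two implications together give the equivalence.
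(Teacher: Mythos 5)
The paper itself offers no proof of this statement: it is imported verbatim from \cite{daf} (Theorems 1.8 and 1.9), so there is nothing internal to compare against. Your argument is, as far as I can check, a correct self-contained derivation. The first half is sound: the reduction of $|V:C_n(\varphi,V)|$ to the product over primes of $|V_p:C_n(\varphi|_{G_p},V_p)|$ is legitimate because $C_n(\varphi,V)$ is profinite and $\varphi$ preserves each $G_p$, and your key step --- that the exceptional set $F=\{p\mid \varphi(V_p)\not\subseteq V_p\}$ is finite because $V+\varphi(V)$ is a compact open subgroup containing $V$ with finite index --- is exactly the right device to make the interchange of $\lim_n\frac1n(\cdot)$ with $\sum_p$ trivial rather than an analytic headache. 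The reverse inequality via the carved-out subgroup $V$ with prescribed $p$-components for $p\in S$ works, though you should spell out why this $V$ is open: one needs $(U_0')_q=(U_0)_q$ for $q\notin S$ (true, since each added $W_p$ sits inside $G_p$) and $|(U_0')_p:W_p|<\infty$ for $p\in S$ (true, since $W_p$ is open in $G_p$ and $(U_0)_p$ is compact). In the second half, the statements $N_p=N\cap G_p$, $(G/N)_p\cong G_p/N_p$ and the continuity of the extension $\varphi=\psi\times\prod_{q\ne p}\mathrm{id}$ all lean on Braconnier's local direct product decomposition of periodic locally compact abelian groups and of their closed subgroups and quotients; this is indeed in \cite{HHR}, but since the topological isomorphism $G_p/(N\cap G_p)\cong G_pN/N$ is not automatic for arbitrary locally compact groups, the appeal to the local-product structure should be made explicit rather than parenthetical. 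Modulo these routine expansions, the proof is complete and is essentially a reconstruction of the strategy of the cited source (primary decomposition of the compact open subgroups plus the index formula \eqref{htoptdlc}), carried out at the level of detail the present paper chose to omit.
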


Theorem \ref{LAAAAAst:thm} (ii) shows that Addition Theorems may be reduced to Addition Theorems on $p$-Sylow subgroups. This means that the presence of a decomposition  helps to determine groups  in $\mathfrak{E}_0$ or in $\mathfrak{E}_{<\infty}$, just looking at Sylow $p$-subgroups in $\mathfrak{E}_0$ or in $\mathfrak{E}_{<\infty}$. 

\begin{remark}
For  compactly generated locally compact abelian groups,   Theorem \ref{CompactlyGenerated} shows that Lemma \ref{ATT_for_CGLCA_Groups} can be applied and so we have an Addition Theorem. This helps to reduce the computation of the topological entropy of continuous endomorphisms  to the topological entropy of continuous endomorphisms arising from  factors.
\end{remark}

\section{Heisenberg groups on p-adic rationals}

As application of Corollary~\ref{invbase}, we have that a compact $p$-group  $G$ with local basis $\{\Omega^n(G)\mid n\in\mathbb{N}\}\subseteq \mathcal{U}(G)$  should belong to $\mathfrak{E}_0$. Note that this applies to $\mathbb{Z}_p^n\times F_p$, where  $F_p$ is  finite $p$-group. 

\begin{remark}Groups of the form $\mathbb{Z}_p \times F_p$ for $F_p$  finite nonabelian $p$-group are among the easiest examples of infinite nilpotent compact $p$-groups which can be produced in $ \mathfrak{E}_0$. Looking at \cite[Section 3.1]{lg},  a finite $p$-group $F_p$ is of maximal class if $|p^n|$ with $n>3$ and its nilpotency class is $c =n-1$.  Their costruction can be found in \cite[Examples 3.1.5]{lg}. Now $\mathbb{Z}_p \times F_p$ has nilpotency class exactly $n$ by Fitting's Lemma \cite[Lemma 1.1.21]{lg}. This means that we have already an example of an infinite nonabelian compact $p$-group of nilpotency class arbitrarily large in $\mathfrak{E}_0$. 
\end{remark}

Given a commutative unitary topological ring $R$, the \textit{Heisenberg group} on $R$ is the group of all $(n+2) \times (n+2)$-matrices of the following form
\begin{equation}\label{largeheisenberg1} M(A,B;c) = \left(\begin{array}{c|cccc|c} 1 & a_1 & a_2 & ... & a_n & c\\
\hline 0 & 1 & 0 & ... & 0 & b_1  \\ 
0 & 0 & 1 & ... & 0 & b_2  \\ 
... & ... & ... & ...  & ... &...  \\
0 & 0 & 0 & ... & 1 & b_n  \\ 
\hline 0 & 0 & 0 & 0 & 0 & 1\\ \end{array}\right)= 
\left(\begin{array}{cccc} 1 & A & c \\
O & I_n & B  \\ 
0 & O & 1  \\ 
\end{array}\right),
\end{equation}
where the block $O$ is of all zeros, $I_n$ denotes a identity matrix $n \times n$, $A$ the $n$-tuple row $(a_1, \ldots, a_n)$, $B$ the $n$-tuple column $(b_1, \ldots, b_n)$. Of course, for $n=1$ we get the usual representation of the Heisenberg group as group of matrices $3 \times 3$.

In particular,  the matrices  \eqref{largeheisenberg1} 
have coefficients $m_{ij}$ such that
\begin{equation}\label{largeheisenberg2}
m_{ij}=\left\{\begin{array}{lcl} 1,  \ \mbox{if} \ i=j,\\
0, \ \mbox{if} \ i>j, \ \mbox{or}  \ 1<i<j<n-1.\\
\end{array}\right.
 \end{equation}
Note that $\mathrm{GL}(R^{n+2})$ is the general linear group of dimension $n+2$ of all invertible matrices with coefficients in $R$, and the set of all matrices \eqref{largeheisenberg1} is denoted by $\mathbb{H}_{n}(R)$ and equipped with the product topology induced by the product topology in $R^{{(n+2)}^2}$. In particular, one can check that   $\mathbb{H}_{n}(R)$ is nilpotent of class $2$, since the center
\begin{equation}\label{largeheisenberg3}Z(\mathbb{H}_{n}(R))=\overline{[\mathbb{H}_{n}(R),\mathbb{H}_{n}(R)]}=\left\{\left(\begin{array}{cccc} 1 & O & c \\
O & I_n & O  \\ 
0 & O & 1  \\ 
\end{array}\right) \ \mid \ \ c \in R  \right\}
\end{equation}
is topologically isomorphic to $(R,+)$ and the central quotient
\begin{equation}\label{centralquotient}
\mathbb{H}_{n}(R)/Z(\mathbb{H}_{n}(R)) \cong \underbrace{ (R,+)\times (R,+) \times \ldots \times (R,+) }_{2n-\mbox{times}},
\end{equation} 
is topologically isomorphic to $2n$ copies of $(R,+)$. Note that for $R=\mathbb{Z}$, or $\mathbb{Z}_p$, or $\mathbb{Z}(p)$,\eqref{centralquotient} is topologically generated by the matrices of the following form
\begin{equation}\label{elementaryabelianquotient}  \left(\begin{array}{c|cccc|c} 1 & 1 & 0 & ... & 0 & 1\\
\hline 0 & 1 & 0 & ... & 0 & 0  \\ 
0 & 0 & 1 & ... & 0 & 0  \\ 
... & ... & ... & ...  & ... &...  \\
0 & 0 & 0 & ... & 1 & 0  \\ 
\hline 0 & 0 & 0 & 0 & 0 & 1\\ \end{array}\right), \ \ 
 \left(\begin{array}{c|cccc|c} 1 & 0 & 1 & ... & 0 & 1\\
\hline 0 & 1 & 0 & ... & 0 & 0  \\ 
0 & 0 & 1 & ... & 0 & 0  \\ 
... & ... & ... & ...  & ... &...  \\
0 & 0 & 0 & ... & 1 & 0  \\ 
\hline 0 & 0 & 0 & 0 & 0 & 1\\ \end{array}\right), \ \
\ldots \\ 
 \left(\begin{array}{c|cccc|c} 1 & 0 & 0 & ... & 1 & 1\\
\hline 0 & 1 & 0 & ... & 0 & 0  \\ 
0 & 0 & 1 & ... & 0 & 0  \\ 
... & ... & ... & ...  & ... &...  \\
0 & 0 & 0 & ... & 1 & 0  \\ 
\hline 0 & 0 & 0 & 0 & 0 & 1\\ \end{array}\right),
\end{equation}
along with the corresponding ones where the role of $A$ is played by $B$ in the last column
\begin{equation}\label{elementaryabelianquotientbis}  \left(\begin{array}{c|cccc|c} 1 & 0 & 0 & ... & 0 & 1\\
\hline 0 & 1 & 0 & ... & 0 & 1  \\ 
0 & 0 & 1 & ... & 0 & 0  \\ 
... & ... & ... & ...  & ... &...  \\
0 & 0 & 0 & ... & 1 & 0  \\ 
\hline 0 & 0 & 0 & 0 & 0 & 1\\ \end{array}\right), \ \ 
 \left(\begin{array}{c|cccc|c} 1 & 0 & 0 & ... & 0 & 1\\
\hline 0 & 1 & 0 & ... & 0 & 0  \\ 
0 & 0 & 1 & ... & 0 & 1  \\ 
... & ... & ... & ...  & ... &...  \\
0 & 0 & 0 & ... & 1 & 0  \\ 
\hline 0 & 0 & 0 & 0 & 0 & 1\\ \end{array}\right), \ \
\ldots \\ 
 \left(\begin{array}{c|cccc|c} 1 & 0 & 0 & ... & 0 & 1\\
\hline 0 & 1 & 0 & ... & 0 & 0  \\ 
0 & 0 & 1 & ... & 0 & 0  \\ 
... & ... & ... & ...  & ... &...  \\
0 & 0 & 0 & ... & 1 & 1  \\ 
\hline 0 & 0 & 0 & 0 & 0 & 1\\ \end{array}\right).
\end{equation}
In particular, we describe the nonabelian compact $p$-group $\mathbb{H}(\mathbb{Z}_p)$  below for $n=1$.

Note from \cite[Chapter 2]{rz} that the \textit{Frattini subgroup} $\mathrm{Frat}(G)$ of a profinite group $G$ is defined as the intersection of all its maximal open subgroups. Moreover it is a characteristic subgroup of $G$. An element $g$  of a profinite group $G$ is a \textit{nongenerator} if it can be omitted
from every generating set of $G$, that is, whenever $G = \overline{\langle X, g\rangle}$, then $G = \overline{\langle X \rangle}$. In particular, \begin{remark}\label{frattiniproperty}
We have that the set  of all nongenerators of  a profinite group $G$ coincides with  $\mathrm{Frat}(G)$, see \cite[Lemma 2.8.1]{rz}. Moreover \cite[Lemma 2.8.6]{rz} shows that  the minimal number of generators of a topologically finitely generated profinite group $G$ agrees with the minimal number of generators of $G/\mathrm{Frat}(G)$. For  compact $p$-group $G$, this means that $\mathrm{rank}_p(G)=\mathrm{rank}_p(G/\mathrm{Frat}(G))$.\end{remark}

\begin{example}\label{zpheisenbergsmall} For any prime $p$, consider a separated  bilinear  map \begin{equation} \omega : (x,y)  \in \mathbb{Z}_p \times  \mathbb{Z}_p \to \omega (x,y) \in \mathbb{Z}_p
\end{equation}  and the set $\mathbb{Z}_p^3$ endowed with the binary operation
\begin{equation} \label{h1}
\square  \ :  \ ((x_1, y_1, z_2), (x_2, y_2, z_2)) \in \mathbb{Z}_p^3 \times \mathbb{Z}_p^3  \mapsto (x_1, y_1, z_1) \ \square \ (x_2, y_2, z_2) \end{equation}
$$ =\ \Big(x_1 + x_2, \ \  y_1 + y_2,  \ \ z_1 + z_2
+ \omega (x_1,y_2)  \Big) \in \mathbb{Z}_p^3.$$
See terminology  in \cite[Definitions 2.1 and 2.2]{bd}.  In particular, $\square$ is not a commutative operation and $(\mathbb{Z}_p^3, \square)$ satisfies the algebraic axioms of group. Of course, the construction  depends on $\omega$ and $(\mathbb{Z}_p^3, \square)$ is topologically isomorphic to $\mathbb{H}(\mathbb{Z}_p)$ with the matrix product. 
The center, the Frattini subgroup and the derived subgroup of $\mathbb{H}(\mathbb{Z}_p)$ satisfy
\begin{equation}\label{h3}
\mathrm{Frat}(\mathbb{H}(\mathbb{Z}_p)) \supseteq Z(\mathbb{H}(\mathbb{Z}_p))=\{M(0,0;c) \ | \ c \in \mathbb{Z}_p\}=  \overline{[\mathbb{H}(\mathbb{Z}_p),\mathbb{H}(\mathbb{Z}_p)]} \simeq \mathbb{Z}_p.
\end{equation}

We can now look at topological generators  and relations for  $\mathbb{H}(\mathbb{Z}_p)$, finding that
\begin{equation}\label{h5}
\mathbb{H}(\mathbb{Z}_p) = \overline{\langle  M(1,0;0), M(0,1;0), M(0,0;1) \ | \ [M(1,0;0), M(0,1;0)]=M(0,0;1),  }
\end{equation}
\[\overline{[M(1,0;0), M(0,0;1)]=[M(0,1;0),M(0,0;1)]=I_3
 \rangle}.
\]
From Remark \ref{frattiniproperty}, the $p$-rank of $\mathbb{H}(\mathbb{Z}_p)$ can be reduced to the Frattini quotient, i.e.:
\begin{equation}\label{LAAAAAAAAAST}
\mathrm{rank}_p(\mathbb{H}(\mathbb{Z}_p))=\mathrm{rank}_p(\mathbb{H}(\mathbb{Z}_p)/\mathrm{Frat}(\mathbb{H}(\mathbb{Z}_p)))=2.
\end{equation} 
\end{example}

Indeed, $\mathrm{Frat}(G)=\overline{\Omega^1(G) [G,G]}$ for any compact $p$-group $G$
by \cite[Lemma 2.8.7 (c)]{rz}. Since this is true of course when $G=\mathbb{H}(\mathbb{Z}_p)$, 
in  \eqref{h3} one can compute $\mathrm{Frat}(\mathbb{H}(\mathbb{Z}_p))$ as follows.  Now $[\mathbb{H}(\mathbb{Z}_p),\mathbb{H}(\mathbb{Z}_p)] = Z(\mathbb{H}(\mathbb{Z}_p))$ and 
\begin{equation}
\Omega^1(\mathbb{H}(\mathbb{Z}_p)) = \left(\begin{array}{cccc} 1 & p\mathbb{Z}_p\ & \mathbb{Z}_p \\
0 & 1 & p\mathbb{Z}_p  \\ 
0 & 0 & 1  \\ 
\end{array}\right) 
\end{equation} 
are compact, 
so $\mathrm{Frat}(\mathbb{H}(\mathbb{Z}_p))=\Omega^1(\mathbb{H}(\mathbb{Z}_p)) Z(\mathbb{H}(\mathbb{Z}_p))$, since the subgroup $\Omega^1(\mathbb{H}(\mathbb{Z}_p)) Z(\mathbb{H}(\mathbb{Z}_p))$ is compact.
Therefore, $\mathbb{H}(\mathbb{Z}_p)/\mathrm{Frat}(\mathbb{H}(\mathbb{Z}_p)) \cong \mathbb{Z}(p)\times \mathbb{Z}(p)$. This proves the second equality on (\ref{LAAAAAAAAAST}).  Example \ref{zpheisenbergsmall} holds more generally than $R=\mathbb{Z}_p$, see  \cite[Theorem 2.5 and Lemma 5.5]{grundh} and  \cite[\S 4]{bd}. Now we look at  $\mathbb{H}_n(\mathbb{Z}_p)$ and  $\mathbb{H}_n(\mathbb{Q}_p)$ for $n$ large enough.

\begin{lemma}\label{prankqp}
The Heisenberg group $\mathbb{H}_{n}(\mathbb{Q}_p)$ is a locally compact nonabelian $p$-group of nilpotency class two and  \[\mathrm{rank}_p(\mathbb{H}_{n}(\mathbb{Q}_p))=\mathrm{rank}_p(\mathbb{H}_{n}(\mathbb{Q}_p)/Z(\mathbb{H}_{n}(\mathbb{Q}_p)))=2n.\] 
\end{lemma}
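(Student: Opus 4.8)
Write $G=\mathbb{H}_n(\mathbb{Q}_p)$ for brevity. The plan is to dispose of the purely structural assertions by direct matrix computation and then to concentrate all the real work on the rank identity, reducing it to the central quotient. For the structural part, first I would note that $M(A,B;c)\mapsto (A,B,c)$ identifies $G$ with $\mathbb{Q}_p^{2n+1}$ as a topological space, so local compactness is inherited from $\mathbb{Q}_p$. The multiplication rule gives $M(A_1,B_1;c_1)\,M(A_2,B_2;c_2)=M(A_1+A_2,\,B_1+B_2;\,c_1+c_2+A_1B_2)$, whence the commutator of two elements equals $M(0,0;\,A_1B_2-A_2B_1)$; choosing $A_1,B_2$ with $A_1B_2\neq 0$ shows $G$ is nonabelian. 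Specializing \eqref{largeheisenberg3} and \eqref{centralquotient} to $R=\mathbb{Q}_p$ yields $Z(G)\cong(\mathbb{Q}_p,+)$ and $G/Z(G)\cong\mathbb{Q}_p^{2n}$, which is abelian, and since all commutators land in $Z(G)$ the group is nilpotent of class exactly $2$.

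Next I would verify the $p$-group property. By induction on the exponent one gets $M(A,B;c)^{p^k}=M\!\big(p^kA,\,p^kB;\,p^kc+\tbinom{p^k}{2}AB\big)$, where $AB=\sum_i a_ib_i\in\mathbb{Q}_p$. Each of $p^kA$, $p^kB$ and $p^kc$ tends to $0$ because $|p^k|_p=p^{-k}\to 0$, and $\big|\tbinom{p^k}{2}\big|_p\to 0$ as well, its $p$-adic valuation being $k$ for odd $p$ and $k-1$ for $p=2$. Hence $M(A,B;c)^{p^k}\to I$ for every element, so $G=G_p$ is a periodic $p$-group in the sense of the definitions recalled in Section 1.

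For the rank, the central quotient is immediate: $G/Z(G)\cong\mathbb{Q}_p^{2n}$, so Theorem \ref{p-rank-finite} with $\alpha=\gamma=\delta=0$ and $\beta=2n$ gives $\mathrm{rank}_p(G/Z(G))=2n$. It then remains to establish $\mathrm{rank}_p(G)=\mathrm{rank}_p(G/Z(G))$. The inequality $\mathrm{rank}_p(G)\geq\mathrm{rank}_p(G/Z(G))$ is the general principle that a quotient never needs more topological generators than the ambient group. For the reverse inequality I would exploit that $Z(G)=\overline{[G,G]}$ and that, since $\mathbb{Q}_p$ is $p$-divisible, every central element satisfies $M(0,0;c)=M(0,0;c/p)^p$, so $Z(G)\subseteq\Omega^1(G)$. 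Thus $Z(G)$ lies inside the Frattini-type subgroup $\overline{\Omega^1(G)\,[G,G]}$ and ought to be absorbed, exactly as in the compact model of Example \ref{zpheisenbergsmall} (via Remark \ref{frattiniproperty}), where the rank-$1$ center was swallowed by $\mathrm{Frat}$ and did not enter the count.

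The main obstacle is precisely this upper bound $\mathrm{rank}_p(G)\leq 2n$: one must control closed subgroups $H$ that meet the center. I would analyze such an $H$ through its image $\bar H\leq\mathbb{Q}_p^{2n}$ and its trace $H\cap Z(G)\leq\mathbb{Q}_p$; the naive additivity bound only yields $2n+1$, and the surplus coming from $H\cap Z(G)$ must be removed by invoking the symplectic commutator form (so that $[x,y]$ recovers a nonzero central element whenever $\bar x,\bar y$ are non-orthogonal). The delicate point is that when $\bar H$ already forces enough independent directions, the trace $H\cap Z(G)$ is produced by commutators of lifts of generators of $\bar H$ and so costs no extra generator. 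Making this absorption rigorous in the non-compact, $p$-divisible setting is genuinely the hard step, because the usual Frattini quotient degenerates here: $\Omega^1(\mathbb{Q}_p^{2n})=\mathbb{Q}_p^{2n}$, so one cannot simply copy the compact argument and must instead combine Theorem \ref{p-rank-finite} applied to $\bar H$ and to $H\cap Z(G)$ with the commutator identity.
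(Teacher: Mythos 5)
Your structural discussion is sound, and in one respect it goes beyond the paper: the computation $M(A,B;c)^{p^k}=M\bigl(p^kA,\,p^kB;\,p^kc+\binom{p^k}{2}AB\bigr)\to I$ explicitly verifies the periodic $p$-group property, which the paper only asserts by specializing \eqref{largeheisenberg1}--\eqref{centralquotient} to $R=\mathbb{Q}_p$. The lower bound $\mathrm{rank}_p(G)\geq\mathrm{rank}_p(G/Z(G))=2n$ via Theorem \ref{p-rank-finite} is also fine, writing $G=\mathbb{H}_n(\mathbb{Q}_p)$ as you do.

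The genuine gap is exactly where you flag it: the upper bound $\mathrm{rank}_p(G)\leq 2n$ is never proved, and that inequality is the entire content of the rank statement, since the naive count gives $2n+1$. Declaring that making the absorption of the center rigorous ``is genuinely the hard step'' is not a proof of that step. Moreover the route you gesture at --- pushing $Z(G)$ into a Frattini-type subgroup $\overline{\Omega^1(G)[G,G]}$ --- collapses for the reason you yourself notice: $\mathbb{Q}_p$ is $p$-divisible, so $\Omega^1(G)=G$ and the Frattini quotient carries no information. The paper's proof avoids this entirely. Writing $K_1\simeq\mathbb{Q}_p^n$ and $K_2\simeq\mathbb{Q}_p^n$ for the two block subgroups coming from the $A$-row and the $B$-column in \eqref{largeheisenberg1}, it uses the factorization $\mathbb{H}_n(\mathbb{Q}_p)=\bigl(Z(\mathbb{H}_n(\mathbb{Q}_p))\times K_2\bigr)\rtimes K_1$ together with the identity $Z(\mathbb{H}_n(\mathbb{Q}_p))=\overline{[K_2,K_1]}$: every element factors as $[u_2,u_1]\,k_2\,k_1$ with $u_i,k_i\in K_1\cup K_2$, so a closed subgroup containing topological generating sets of $K_1$ and of $K_2$ already contains $\overline{[K_2,K_1]}=Z(\mathbb{H}_n(\mathbb{Q}_p))$ and hence the whole group; the rank is therefore carried entirely by $K_1$ and $K_2$ and equals $n+n=2n$. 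This is precisely the ``commutators of lifts recover the center at no extra cost'' mechanism you wanted, made concrete by the semidirect-product decomposition instead of by a Frattini argument or a symplectic-form analysis of an arbitrary closed subgroup. To complete your write-up you must either reproduce that factorization or actually carry out your subgroup-by-subgroup estimate; as it stands the decisive inequality is asserted, not established.
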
 

\begin{proof} Looking at \eqref{largeheisenberg1},  \eqref{largeheisenberg2}, \eqref{largeheisenberg3} and \eqref{centralquotient}  with $R=\mathbb{Q}_p$, it is clear that  $\mathbb{H}_{n}(\mathbb{Q}_p)$ is a locally compact nonabelian $p$-group of nilpotency class two. Now consider \eqref{largeheisenberg1} and observe that
\begin{equation}\label{factorization}
H_1=Z(\mathbb{H}_{n}(\mathbb{Q}_p)) \times K_2 \simeq \mathbb{Q}_p \times \mathbb{Q}^n_p \ \ \mbox{and} \ \
  K_1 \simeq \mathbb{Q}^n_p.
  \end{equation} 
Moreover $H_1$ is a closed normal subgroup such that
\begin{equation}\label{factorizationbis}
\mathbb{H}_{n}(\mathbb{Q}_p) = H_1 \rtimes K_1
=\{h_1k_1 \mid h_1 \in H_1 \ \mbox{and} \ k_1 \in K_1 \}
\end{equation}
\[=\{zk_2k_1 \mid z \in Z(\mathbb{H}_{n}(\mathbb{Q}_p)) \  k_2 \in K_2,  k_1 \in K_1  \}=\{[u_2,u_1] k_2 k_1 \mid  k_2, u_2 \in  K_2 \ \mbox{and} \ k_1, u_1 \in K_1 \}, \]
because  we have \begin{equation}\label{h5add}Z(\mathbb{H}_n(\mathbb{Q}_p))=\overline{[\mathbb{H}_n(\mathbb{Q}_p),\mathbb{H}_n(\mathbb{Q}_p)]}=\overline{[K_2, K_1]}.
 \end{equation}  Therefore the $p$-rank of $\mathbb{H}_{n}(\mathbb{Q}_p)$ is reduced to that of $K_1 $ plus that of $K_2$, i.e.:  $2n$. \end{proof}

Lemma \ref{prankqp} can be proved with the idea of Example \ref{zpheisenbergsmall}, that is, noting that \begin{equation}\mathrm{Frat}(\mathbb H_{n}(\mathbb{Q}_p)) \supseteq Z(\mathbb H_{n}(\mathbb{Q}_p)),
 \end{equation}
and that the quotient $\mathbb H_{n}(\mathbb{Q}_p)/\mathrm{Frat}(\mathbb H_{n}(\mathbb{Q}_p))$ has $p$-rank $2n$, but we gave an argument based on the structure of semidirect product for Heisenberg groups. Moreover also here one could argue that  $\mathrm{Frat}(\mathbb{H}_n(\mathbb{Q}_p))=\overline{\Omega^1(\mathbb{H}_n(\mathbb{Q}_p)) [\mathbb{H}_n(\mathbb{Q}_p),\mathbb{H}_n(\mathbb{Q}_p)]}$, even if in this sitation we don't have a compact $p$-group but a periodic locally compact $p$-group.

\begin{remark}
  Looking at \cite[Lemma 2.4 and Theorem 2.5]{bd}, one can show that $\mathbb{H}(\mathbb{Z}_p)$ possesses abelian maximal subgroups of the following form
\begin{equation}\label{h5bis}
H_1 = Z(\mathbb{H}(\mathbb{Z}_p)) \oplus \overline{\langle M(1,0;0) \rangle} \simeq {\mathbb{Z}_p}^2 \ \ \mbox{and} \ \ H_2 = Z(\mathbb{H}(\mathbb{Z}_p)) \oplus \overline{\langle M(0,1;0) \rangle} \simeq {\mathbb{Z}_p}^2
\end{equation}
satisfying the following conditions: 
\begin{equation}\label{h5tris}
H_1 \cap H_2 = Z(\mathbb{H}(\mathbb{Z}_p)), \ \ H_1 \cap \overline{\langle M(0,1;0) \rangle} = 1, \ \ H_2 \cap \overline{\langle M(1,0;0) \rangle} = 1, 
\end{equation}
 \begin{equation}\label{h6}
\mathbb{H}(\mathbb{Z}_p)= H_1 \rtimes \overline{\langle M(0,1;0)} \rangle =H_2 \rtimes \overline{\langle M(1,0;0) \rangle} \simeq  {\mathbb{Z}_p}^2 \rtimes \mathbb{Z}_p.
\end{equation}
In particular \eqref{h6} shows that any element of $\mathbb{H}(\mathbb{Z}_p)$ can be written uniquely as product of an element of $H_1$ and of one of $\overline{\langle M(0,1;0) \rangle}=K_1$,  but any element of $H_1$ can be also written uniquely as product of an element of  $Z(\mathbb{H}(\mathbb{Z}_p))$ and of one of $\overline{\langle M(1,0;0) \rangle}=K_2$ by \eqref{h5bis}. 
In Fig.~1 we identify the aforementioned subgroups in the lattice of closed subgroups $\mathcal{SUB}(\mathbb{H}(\mathbb{Z}_p))$ of $\mathbb{H}(\mathbb{Z}_p)$. At the first level (beginning from the bottom of Fig.~1) we find the trivial subgroup. At the second level there are three subgroups isomorphic to $\mathbb{Z}_p$. At the third level there are  two subgroups isomorphic to the additive group ${\mathbb{Z}_p}^2$. At the fourth level we find the entire group. Note that  Fig.~1 shows only the subgroups that can be directly deduced from \eqref{h6} and not all the subgroups of $\mathbb{H}(\mathbb{Z}_p)$.
\begin{center}
\begin{tikzpicture}
[scale=.8,auto=left]
   \node (d) at (4,3) {$\mathbb{H}(\mathbb{Z}_p)$};
  \node (b) at (-0.5,2) {$H_1$};
  \node (e) at (-2,-0.5) {$K_2 $};
  \node (a) at (8,2) {$H_2$};
  \node
   (f) at (10,-0.5) {$K_1 $};
  \node (c) at (4,-0.5) {$Z(\mathbb{H}(\mathbb{Z}_p))$};
  \node (h) at (4,-4) {$1$};

  
  \draw (d) -- (a);
  \draw (d) -- (b);
  \draw (e) -- (b);
  \draw (f) -- (a);
  \draw (h) -- (c);
  \draw (e) -- (h) ;
  \draw (f) -- (h) ;
\draw (a) -- (c) ;
\draw (b) -- (c) ;

 \end{tikzpicture}
\centerline{\textbf{Figure 1}:  Some relevant subgroups in $\mathcal{SUB}(\mathbb{H}(\mathbb{Z}_p))$. }
 \end{center}
\vspace{0.5em} 
 In fact one can see that, given the cardinality of the continuum $\mathfrak{c}$ and
fixed $\xi \in \mathbb{Z}_p$, the subset $M_{\xi}$ of all matrices $M (a, \xi a; t) \in \mathbb{H}(\mathbb{Z}_p)$ is a  maximal abelian subgroup of $\mathbb{H}(\mathbb{Z}_p)$, and of course there are $\mathfrak{c}$  of this type.

\end{remark}

\section{Proofs of  Main Theorems}

\begin{proof}[Proof of Theorem \ref{olwethusproof}]

     (a). If $G$ is a compactly generated locally compact abelian group, then Theorem \ref{CompactlyGenerated} implies  $G\cong\mathbb{R}^d\oplus\mathbb{Z}^{m}\oplus K$ for a compact abelian group $K$ and nonnegative integers $m,d$.  Assume in addition that $G$ is slender. Lemma \ref{SlenderGroupsProperties} shows that subgroups of slender groups are slender. Then Lemma \ref{slender_reduced} implies  $n=0$, that is,  $G \simeq \mathbb{Z}^{m}\oplus K$.   Lemma \ref{NoCompactSlenderGroups} implies  $K=0$. Hence $G \simeq \mathbb{Z}^{m}$, and since $\mathbb{Z}^{m}\in\mathfrak{E}_{0}$, the first part of the result follows.   Assume now that $G\in\mathfrak{E}_{0}$ and that $G \cong \mathbb{R}^d \oplus  \mathbb{Z}^m  $.  Since $\mathbb{R}^d \in \mathfrak{E}_{\infty} \setminus \mathfrak{E}_0$, $G$ should be totally disconnected by Theorem \ref{origin} (iii) and so $G \simeq \mathbb{Z}^m$ which is slender. The result follows completely.
         
         (b). From   Theorem \ref{CompactlyGenerated} and the assumption that $K$ is a connected compact abelian group, we have that $G \simeq  \mathbb{R}^d \oplus\mathbb{Z}^{m}\oplus K$ with $K$ of $\mathrm{dim} (K)$ eventually infinite. Then         
         \begin{equation}\label{dim} \mathrm{dim}(G)=  \mathrm{dim} (\mathbb{R}^d) + \mathrm{dim}(\mathbb{Z}^{m}) +  \mathrm{dim}(K)=d + 0 +  \mathrm{dim}(K)
         \end{equation} and this shows that
$\mathrm{dim}(G) < \infty $ if and only if   $\mathrm{dim}(K) < \infty $ if and only if  $K=\mathbb{T}^s$ for some nonnegative integer $s$, see \cite[Corollary 8.22 (5)]{hofmor}.    From Theorem \ref{origin} (i), this means that if $G \in \mathfrak{E}_{<\infty}$, then $\mathrm{dim}(G)< \infty$ hence $\mathrm{dim}(K) < \infty$, and so $G \simeq \mathbb{R}^d \oplus\mathbb{Z}^{m}\oplus \mathbb{T}^s$.  Conversely, assume that $G \simeq \mathbb{R}^d \oplus\mathbb{Z}^{m}\oplus \mathbb{T}^s$.   We may apply Lemma \ref{ATT_for_CGLCA_Groups} with summands $\mathbb{R}^d \in \mathfrak{E}_{<\infty}$,  $\mathbb{Z}^{m} \in \mathfrak{E}_0$ and $\mathbb{T}^s  \in \mathfrak{E}_{<\infty}$, concluding $G \in \mathfrak{E}_{<\infty}$. Note that the computations of topological entropy, which allows us to have $\mathbb{R}^d \in \mathfrak{E}_{<\infty}$,  $\mathbb{Z}^{m} \in \mathfrak{E}_0$ and $\mathbb{T}^s  \in \mathfrak{E}_{<\infty}$, are well known, see \cite{B, LW, SVirili}. The result follows.
     
\end{proof}

\begin{proof}[Proof of Theorem \ref{oldconj}]
First assume that $G$ has $\mathrm{rank}_p(G)<\infty$. We note that closed subgroups and quotients of $G$ are again periodic locally compact $p$-groups. The topological lower central series of $G$ of length $c$ has closed characteristic $p$-subgroups  $ \overline{\gamma_i(G)}$  (with $i=1,2,\ldots, c$) such that \begin{equation}G=\overline{\gamma_1(G)} \ge \overline{\gamma_2(G)}=\overline{[G,G]} \ge \overline{\gamma_3(G)}=\overline{[[G,G],G]} \ge \ldots \ge \overline{\gamma_c(G)} \ge \overline{\gamma_{c+1}(G)}=1\end{equation} and  $\overline{\gamma_i(G)}/\overline{\gamma_{i+1}(G)}$ are locally compact abelian $p$-groups for all $i$. Note also that  closed subgroups and quotients of a periodic locally compact $p$-group of finite $p$-rank have finite $p$-rank. This means that if $G$ has finite $p$-rank, then  $\overline{\gamma_i(G)}/\overline{\gamma_{i+1}(G)}$ are of the form of those in Theorem \ref{p-rank-finite}, and in particular continuous automorphisms of $\overline{\gamma_i(G)}/\overline{\gamma_{i+1}(G)} $ have finite topological entropy. Now we do induction on $c$. Assume $c=1$. Then $G$ is a locally compact abelian group of finite $p$-rank and the result is true by Theorem \ref{p-rank-finite}, because in this situation the continuous automorphisms of $G$ should have finite topological entropy. Assume $c>1$ and that the result is true for all  periodic nilpotent locally compact $p$-groups of derived length at  most $c-1$. Then the continuous automorphisms of $N=\overline{\gamma_c(G)}$  have finite topological entropy, since $N$  is abelian, but also those of $G/N$ have finite topological entropy, since $G/N$ is a locally compact abelian $p$-group of finite $p$-rank. From  Addition Theorem  for continuous automorphisms of totally disconnected locally compact abelian groups (see \cite[Addition Theorem 10]{P}, or Corollary \ref{mon} (b)) we conclude that $AT(G, \varphi, N)$ holds for every continuous automorphism $\varphi$ of $G$. The result follows.  
\end{proof}

\begin{proof}[Proof of Theorem \ref{Heisenbergpgroups}] From Lemma \ref{prankqp}, the Heisenberg group $\mathbb{H}_{n}(\mathbb{Q}_p)$ is a periodic locally compact nonabelian $p$-group of nilpotency class two and  $\mathrm{rank}_p(\mathbb{H}_{n}(\mathbb{Q}_p))=2n$. Then we shall only prove that   $ \mathbb H_{n}(\mathbb{Q}_p)$ belongs to  $\mathfrak E_{<\infty}$, but not to $ \mathfrak E_0$.

Assume that $n=1$. From \cite[Theorem 6.8]{daf} we know that $\mathbb{H}(\mathbb{Q}_p)$ belongs to  $\mathfrak E_{<\infty}$, but not to $ \mathfrak E_0$. Then there exists a subgroup $S$ of $\mathbb{H}_n(\mathbb{Q}_p)$ which is isomorphic to $\mathbb{H}(\mathbb{Q}_p)$ as topological group, for instance $S$ can be realized  putting in \eqref{largeheisenberg1} the condition $a_i=b_i=0$ for all $i=2,3,\ldots,n$. This is sufficient to show that $\mathbb{H}_n(\mathbb{Q}_p)$ cannot be in $\mathfrak E_0$, since it contains a subgroup $S$ which is not in $\mathfrak E_0$. It remains to check that $\mathbb{H}_n(\mathbb{Q}_p)$ belongs to   $\mathfrak E_{<\infty}$ and we adapt the argument of \cite[Proof of Theorem 6.8]{daf} for this scope.

Consider $\varphi \in \mathrm{End}(\mathbb{H}_n(\mathbb{Q}_p))$ and $N=\ker \varphi$; we claim that $\mathsf{h}_{\mathsf{top}}(\varphi)<\infty$. 

Assume that $N=1$. We claim that $\varphi \in \mathrm{Aut}(\mathbb{H}_n(\mathbb{Q}_p))$. Since $Z(\mathbb{H}_n(\mathbb{Q}_p))$ is fully invariant, $\varphi_{|Z(\mathbb{H}_n(\mathbb{Q}_p))}$ is  injective, hence $\varphi_{|Z(\mathbb{H}_n(\mathbb{Q}_p))}  $ is a continuous automorphism of $Z(\mathbb{H}_n(\mathbb{Q}_p))$. In particular, $\varphi^{-1}(Z(\mathbb{H}_n(\mathbb{Q}_p))) = Z(\mathbb{H}_n(\mathbb{Q}_p))$ and so   $\bar {\varphi}_{|\mathbb{H}_n(\mathbb{Q}_p)/Z(\mathbb{H}_n(\mathbb{Q}_p))}$ on  $\mathbb{H}_n(\mathbb{Q}_p)/Z(\mathbb{H}_n(\mathbb{Q}_p))$ is  injective. In fact  it  is  a continuous automorphism of $\mathbb{H}_n(\mathbb{Q}_p)/Z(\mathbb{H}_n(\mathbb{Q}_p)) \simeq \mathbb{Q}^{2n}_p$.  Now   $\mathbb{H}_n(\mathbb{Q}_p)$ is a totally disconnected locally compact group, which can be also realized as union of countably many compact sets, and so $\varphi$ is a continuous automorphism by the Open Mapping Theorem \cite[Appendix 1, Exercise EA1.21]{hofmor}.  We may apply Addition Theorems on closed normal subgroups for continuous automorphisms of locally compact groups as per Corollary  \ref{mon} (c), concluding  $\mathsf{h}_{\mathsf{top}}(\varphi)<\infty$ from the fact that both  $\mathsf{h}_{\mathsf{top}}(\varphi_{|Z(\mathbb{H}_n(\mathbb{Q}_p))})<\infty$ and $\mathsf{h}_{\mathsf{top}}(\bar{ \varphi}_{\mathbb{H}_n(\mathbb{Q}_p)/Z(\mathbb{H}_n(\mathbb{Q}_p))})< \infty$ by Theorem \ref{p-rank-finite}.  

Now assume that  $N=\ker \varphi \neq 1$. First we show that $N\cap Z(\mathbb{H}_n(\mathbb{Q}_p))$ is nontrivial and then that $Z(\mathbb{H}_n(\mathbb{Q}_p))\subseteq N$.  If there exists some $y\in N \setminus Z(\mathbb{H}_n(\mathbb{Q}_p))$, then there exists $x\in \mathbb{H}_n(\mathbb{Q}_p)$ such that  $[x,y]$ is nontrivial. This implies that $N\cap [\mathbb{H}_n(\mathbb{Q}_p),\mathbb{H}_n(\mathbb{Q}_p)]$ is nontrivial, because $[x,y]\in N$. The claim follows and  $N\cap Z(\mathbb{H}_n(\mathbb{Q}_p))$ is a nontrivial closed subgroup of $Z(\mathbb{H}_n(\mathbb{Q}_p))$, hence $Z(\mathbb{H}_n(\mathbb{Q}_p))/(N\cap Z(\mathbb{H}_n(\mathbb{Q}_p)))$ is torsion because nontrivial quotient of $\mathbb{Q}_p$. On the other hand, $Z(\mathbb{H}_n(\mathbb{Q}_p))/(N\cap Z(\mathbb{H}_n(\mathbb{Q}_p))) \cong \varphi(Z(\mathbb{H}_n(\mathbb{Q}_p))) $ is  a subgroup of $\mathbb{H}_n(\mathbb{Q}_p)$ (up to continuous isomorphisms), hence torsion-free. Consequently $Z(\mathbb{H}_n(\mathbb{Q}_p))/(N\cap Z(\mathbb{H}_n(\mathbb{Q}_p)))$  is trivial, and the other claim  $Z(\mathbb{H}_n(\mathbb{Q}_p)) \subseteq N$ follows. Since $N$ contains $Z(\mathbb{H}_n(\mathbb{Q}_p))=[\mathbb{H}_n(\mathbb{Q}_p),\mathbb{H}_n(\mathbb{Q}_p)]$, we may apply Addition Theorems as per Corollary  \ref{mon} (b), hence  $\mathsf{h}_{\mathsf{top}}(\varphi)=    \mathsf{h}_{\mathsf{top}}(\bar \varphi_{\mathbb{H}_n(\mathbb{Q}_p)/N})$ is finite by Theorem~\ref{p-rank-finite}. Therefore the result follows.

\end{proof}


\end{document}